\documentclass[11pt]{article}

\usepackage[
	a4paper,
	margin=1in
]{geometry}
\usepackage{setspace}
\usepackage{sectsty} 
\usepackage{titlesec}
\usepackage{appendix}
\usepackage{hyphenat}

\usepackage{amsmath, amssymb, amsthm}
\usepackage{mathtools}

\usepackage[
	setpagesize=false,
	colorlinks=true,
	linkcolor=BrickRed,
        citecolor=OliveGreen,
        urlcolor=black,
	pdfencoding=auto,
	psdextra,
]{hyperref}
\usepackage{cleveref}

\usepackage{etoolbox} 
\usepackage[shortlabels]{enumitem}
\usepackage{xparse} 

\usepackage{graphicx}
\usepackage[dvipsnames]{xcolor}
\usepackage{tikz}

\usepackage{todonotes}
\usepackage[
    deletedmarkup=sout,
    commentmarkup=uwave,
    authormarkuptext=name
]{changes}

\usepackage{comment}

\setlist[enumerate,1]{label=(\arabic*), ref=(\arabic*)}
\setlist[enumerate,3]{label=(\roman*), ref=(\roman*)}

\allsectionsfont{\boldmath} 

\titlelabel{\thetitle.\quad}

\usepackage{microtype}
\theoremstyle{plain}
\newtheorem{theorem}{Theorem}[section]
\newtheorem{lemma}[theorem]{Lemma}
\newtheorem{corollary}[theorem]{Corollary}
\newtheorem{proposition}[theorem]{Proposition}

\newtheorem{conjecture}[theorem]{Conjecture}

\newtheorem{claim}[theorem]{Claim}
\newtheorem*{claim*}{Claim}

\makeatletter
\newenvironment{claimproof}[1][Proof]{\par
	\pushQED{\qed}%
	
	\normalfont \topsep6\p@\@plus6\p@\relax
	\trivlist
	\item[\hskip\labelsep
	\textit{#1}\@addpunct{.}~]\ignorespaces
}{%
	\popQED\endtrivlist\@endpefalse
}
\makeatother

\newtheorem{case}{Case}
\makeatletter
\@addtoreset{case}{section}
\makeatother

\theoremstyle{definition}
\newtheorem{definition}[theorem]{Definition}
\newtheorem*{definition*}{Definition}
\newtheorem{remark}[theorem]{Remark}

\newcommand{\calA}{\mathcal{A}}
\newcommand{\calB}{\mathcal{B}}
\newcommand{\calC}{\mathcal{C}}

\newcommand{\calP}{\mathcal{P}}

\newcommand{\calW}{\mathcal{W}}

\newcommand{\ve}{\varepsilon}

\newcommand{\defeq}{\coloneqq}

\title{Tower heights for color-avoiding Ramsey numbers of monotone paths}

\author{Jigang Choi%
        \thanks{Department of Mathematical Sciences, KAIST, South Korea. \emph{E-mails:} \textbf{\{jigang.choi, hyunwoo.lee\}@kaist.ac.kr}. Supported by the National Research Foundation of Korea (NRF) grant funded by the Korea government (MSIT) No. RS-2023-00210430.} \and
        Hyunwoo Lee~\footnotemark[1]~\thanks{Extremal Combinatorics and Probability Group (ECOPRO), Institute for Basic Science (IBS). Supported by  the Institute for Basic Science (IBS-R029-C4).}\and
        Tuan Tran~\thanks{School of Mathematical Sciences, University of Science and Technology of China. Supported by the Excellent Young Talents Program (Overseas) of the National Natural Science Foundation of China under Grant No. GG0010007003.}
}

\usepackage[square,sort,comma,numbers]{natbib}
\begin{document}
\date{\today}
\maketitle

\begin{abstract}

    Ramsey numbers of monotone paths in ordered hypergraphs form a natural higher-uniformity extension of the classical Erd\H{o}s--Szekeres theorems, and their tower height was determined by Moshkovitz and Shapira. A color-avoiding variant, initiated by Loh and further developed by Gowers and Long and by Mulrenin, Pohoata, and Zakharov, asks for monotone paths whose edges use only a bounded number of colors rather than a single color.

    For integers $q>p$, let $A_k(n;q,p)$ be the least integer $N$ such that every $q$-coloring of the ordered complete $k$-uniform hypergraph on $\{1,\ldots,N\}$ contains a monotone path of length $n$ whose edges use at most $p$ colors. We prove that, for every fixed $p$ and all sufficiently large $q$, the exact tower height of $A_k(n;q,p)$ is $\lceil (k-1)/p\rceil$. Thus the number of colors allowed on the path affects the Ramsey number at the level of tower height: allowing $p$ colors lowers the height from $k-1$ in the monochromatic problem to $\lceil (k-1)/p\rceil$. This answers questions of Mulrenin, Pohoata, and Zakharov.

    The upper bound follows from a simple block-compression argument. The main contribution is the matching lower bound, for which we develop a novel variant of the stepping-up method. A surprising feature of the proof is the appearance of the Morse--Hedlund theorem, a foundational result in symbolic dynamics and combinatorics on words. We establish and use a finite version of this theorem, which may be of independent interest.
\end{abstract}


\section{Introduction}\label{sec:intro}

In their classical 1935 paper, Erd\H{o}s and Szekeres~\cite{ErdosSzekeres}
proved two foundational results that have become central in Ramsey theory
and discrete geometry. One of them is the monotone subsequence theorem,
which asserts that every sequence of $n^2+1$ distinct real numbers
contains a monotone subsequence of length $n+1$. The second is the
cup--cap theorem, which states that every set of
$\binom{2n}{n}+1$ points in the plane, in general position and with
distinct $x$-coordinates, contains $n+2$ points
$p_1,\ldots,p_{n+2}$, ordered by increasing $x$-coordinate, such that
the slopes of the consecutive segments
$(p_1,p_2),(p_2,p_3),\ldots,(p_{n+1},p_{n+2})$
are monotone. These two theorems have had numerous applications; see,
for example, Steele~\cite{Steele} for several proofs of the monotone
subsequence theorem and related variations. 

Fox, Pach, Sudakov, and Suk~\cite{Fox-Pach-Sudakov-Suk} observed that
these two classical results fit naturally into a common Ramsey-theoretic
framework involving monotone paths in ordered hypergraphs. Let
$[N]^{(k)}$ denote the complete $k$-uniform hypergraph on the ordered
vertex set $[N]=\{1,2,\ldots,N\}$.
Given vertices $v_1<v_2<\cdots<v_{n+k-1}$, we say that the edges
\[
    \{v_1,\ldots,v_k\}, \{v_2,\ldots,v_{k+1}\}, \ldots, \{v_n,\ldots,v_{n+k-1}\}
\]
form a \emph{monotone path of length $n$}.  We denote this ordered
path by $P_n^{(k)}$.

Let $MS_k(n;q)$ be the least integer $N$ such that every $q$-coloring of
the edges of $[N]^{(k)}$ contains a monochromatic copy of $P_n^{(k)}$.
With this convention, the monotone subsequence theorem and the
cup--cap theorem of Erd\H{o}s and Szekeres give the classical bounds
\[
    MS_2(n;2) \le n^2+1
    \qquad\text{and}\qquad
    MS_3(n;2) \le \binom{2n}{n}+1,
\]
respectively. Motivated in part by geometric generalizations of the
cup--cap theorem, Fox, Pach, Sudakov, and Suk~\cite{Fox-Pach-Sudakov-Suk}
initiated the systematic study of $MS_k(n;q)$ for general $k$ and $q$.  This problem was subsequently resolved
by Moshkovitz and Shapira~\cite{MS}, who established a connection
between these Ramsey numbers and the enumeration of high-dimensional
integer partitions, or equivalently antichains in generalized
hypercubes. To state their estimates, we define the tower function
$T_i(x)$ recursively by 
$T_1(x)=x$ and $T_{i+1}(x)=2^{T_i(x)}$. We say that $i$ is the height of the tower function $T_i(x)$.

\begin{theorem}[Moshkovitz--Shapira~\cite{MS}]\label{thm:M-S}
    There is an absolute constant $n_0$ such that for all $k \geq 3$, $q \geq 2$ and $n\ge n_0$, we have 
    $$
        T_{k-1}\left(\frac{n^{q-1}}{2\sqrt{q}}\right)\leq MS_k(n; q) \leq T_{k-1}\left(2n^{q-1}\right).
    $$
\end{theorem}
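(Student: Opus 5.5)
The plan is to go through the Moshkovitz--Shapira reduction of monochromatic monotone paths to a poset-counting problem, and prove both bounds by induction on $k$.

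\emph{Reduction and base case.} Given a $q$-coloring of $[N]^{(k)}$, assign to each $(k-1)$-set $S=\{v_1<\cdots<v_{k-1}\}$ the vector $f(S)\in[n]^q$ whose $c$-th coordinate is the number of vertices on the longest monotone path of color $c$ ending with the vertices of $S$ (after shifting by $k-2$). If there is no monochromatic $P_n^{(k)}$, all coordinates are at most $n-1$. For $k=2$ this is exactly the Erd\H{o}s--Szekeres labelling argument and gives $MS_2(n;q)=(n-1)^q+1$. This is the base of the induction. For general $k$, if $e=S\cup\{v_k\}$ has color $c$, then extending along $e$ strictly increases coordinate $c$. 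So the labels along "consecutive" $(k-1)$-sets behave like a coloring of $[N]^{(k-1)}$ by comparability data.

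\emph{Upper bound.} Following Moshkovitz--Shapira, I would define $N_k(\mathcal P)$ for a poset $\mathcal P$ as the largest $N$ admitting a map from $[N]^{(k-1)}$ to $\mathcal P$ such that consecutive sets $S,S'$ never satisfy $f(S)\ge f(S')$ in the appropriate sense. The step-down lemma then bounds $N_k(\mathcal P)$ by $N_{k-1}$ of the down-set lattice of $\mathcal P$: map each $(k-2)$-set $R$ to the down-closure of $\{f(R\cup\{v\}) : v>\max R\}$. Iterating, $MS_k(n;q)$ is at most the number of antichains of an iterated down-set poset. The poset $[n]^{q}$ has at most $2^{n^{q-1}}$ antichains, since antichains are determined by monotone surfaces of size at most $q n^{q-1}$. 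Each further level adds one exponential, which gives height $k-1$ with top argument $2n^{q-1}$. The bookkeeping of constants, ensuring that each step only exponentiates and does not inflate the argument, is routine but fiddly.

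\emph{Lower bound, the main obstacle.} I would first show that the step-down inequality is essentially tight: $N_k(\mathcal P)\ge N_{k-1}(\text{down-sets of }\mathcal P)$ up to constants, via an explicit stepping-up construction that reverses the map above. Then I would lower-bound the antichain count of $[n]^q$ by $2^{n^{q-1}/(2\sqrt q)}$ using the middle-layer antichains of the grid; the width of $[n]^q$ is about $n^{q-1}/\sqrt q$ by a local CLT estimate. Finally I would show that an iterated down-set lattice of a poset with many antichains again has many antichains, for example through large antichains in Boolean-lattice-like pieces. Proving that the construction faithfully realises colorings with no monochromatic path is the hardest part. I would first try encoding each vertex as a chain of down-sets and coloring each edge by a coordinate witnessing strict non-containment.
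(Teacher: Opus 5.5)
The paper gives no proof of this statement; it is quoted from Moshkovitz--Shapira. Your outline follows their route of identifying $MS_k(n;q)$ with the size of the iterated down-set lattice $D^{(k-2)}([n]^q)$, and the architecture is right, but as written two steps of the upper bound fail.

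First, your step-down map is oriented the wrong way. For consecutive $(k-1)$-sets $S,S'$ the only information is $f(S')\not\le f(S)$. With $g(R)=\downarrow\{f(R\cup\{v\}):v>\max R\}$, this constrains only the single extension $v=v_{k-1}$ of $R_1$, so it gives no non-containment between $g(R_1)$ and $g(R_2)$. Use predecessors instead, $g(R)=\downarrow\{f(\{u\}\cup R):u<\min R\}$. Then $f(\{v_1\}\cup R_2)\in g(R_2)\setminus g(R_1)$, so $g(R_2)\not\subseteq g(R_1)$. This is the same condition one level down, with $D(P)$ in place of $P$, and iterating to vertices gives an injection of $[N]$ into $D^{(k-2)}([n]^q)$.

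Second, the claim that $[n]^q$ has at most $2^{n^{q-1}}$ antichains is false: $[n]^2$ already has $\binom{2n}{n}\approx 4^n$ down-sets. The correct estimate is $|D([n]^q)|\le\binom{2n}{n}^{n^{q-2}}<2^{2n^{q-1}}$. It follows by restricting the antitone height function $[n]^{q-1}\to\{0,\dots,n\}$ of a down-set to $n^{q-2}$ parallel lines. This is exactly where the $2$ in $T_{k-1}(2n^{q-1})$ comes from, and $|D(P)|\le 2^{|P|}$ then handles the remaining $k-3$ levels.

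On the lower bound, you have put the difficulty in the wrong place and chosen the wrong invariant. The construction you call the hardest part is exact and short. Suppose $h$ maps $j$-sets to $D(P)$ with $h(R_2)\not\subseteq h(R_1)$ for consecutive $R_1,R_2$. For the $(j+1)$-set $T$ with first and last $j$-subsets $R_1,R_2$, choose $h'(T)\in h(R_2)\setminus h(R_1)$. Since $h(R)$ is a down-set, consecutive $T_1,T_2$ overlapping in $R$ satisfy $h'(T_2)\not\le h'(T_1)$. Start from a linear extension of $D^{(k-2)}([n]^q)$ on the vertices, and at the end colour each $k$-set by a coordinate in which $f$ strictly increases. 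This gives $MS_k(n;q)=|D^{(k-2)}([n]^q)|+1$ exactly.

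The real work is the growth estimate, and ``many antichains'' is not an invariant that iterates. Writing $w(P)$ for the width, you must track width, using $|D(P)|\ge 2^{w(P)}$ and $w(D(P))\ge\binom{w(P)}{\lfloor w(P)/2\rfloor}$. The second inequality holds because $B\mapsto\downarrow B$ embeds the Boolean lattice of a maximum antichain into $D(P)$.

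Moreover, since $n_0$ must be absolute, your width bound for $[n]^q$ has to be uniform in $q$, which a fixed-$q$ local CLT does not provide. Chebyshev's inequality for the rank of a uniform random point of $\{0,\dots,n-1\}^q$ already yields a rank level of size at least $\frac{2}{3}\,n^q/(n\sqrt q+1)$. That is enough to absorb the polynomial losses in the tower and reach $n^{q-1}/(2\sqrt q)$.

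A minor point: with the paper's convention that length counts edges, $MS_2(n;q)=n^q+1$, not $(n-1)^q+1$.
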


We also mention that Pohoata and Zakharov~\cite{Pohoata-Zakharov}
recently showed that, when $q$ is large, the lower bound in
Theorem~\ref{thm:M-S} has the correct order of magnitude in the top
exponent:
\[
    MS_k(n;q)
    =
    T_{k-1}\left(\Theta\left(\frac{n^{q-1}}{\sqrt q}\right)\right).
\]
See also~\cite{Falgas-Ravry-Raty-Tomon} for related developments.

The present paper studies a color-avoiding analogue of these Ramsey
numbers. Instead of looking for a monochromatic path, we ask for a
monotone path whose edges use only a bounded number of colors. This
point of view generalizes the usual Ramsey problem: in a $q$-coloring,
a subgraph using at most one color is monochromatic, while a subgraph
using at most $p$ colors avoids at least $q-p$ colors. Color-avoidance
questions of this type go back to Erd\H{o}s, Hajnal, and Rado~\cite{StepUp}
and were further developed by Erd\H{o}s and Szemer\'edi~\cite{Erdos-Szemeredi}.
For more recent work in this direction, we refer the reader to
\cite{set-ramsey-lower,set-ramsey-code,Loh,Gowers-Long,
Dubroff-Girao-Hurley-Yap,Mulrenin-Pohoata-Zakharov,
Fox-Sudakov-Wigderson}.

Following~\cite{Mulrenin-Pohoata-Zakharov}, let $A_k(n;q,p)$ denote
the least integer $N$ such that every $q$-coloring of $[N]^{(k)}$
contains a monotone path of length $n$ whose edges receive at most
$p$ colors. Thus $A_k(n;q,1)=MS_k(n;q)$. Already the first nontrivial case,
$A_2(n;3,2)$, is quite subtle.

In the graph case, a simple application of the Erd\H{o}s--Szekeres
monotone subsequence theorem gives $A_2(n;3,2) \le n^2+1$. Loh~\cite{Loh} observed that $A_2(n;3,2)\ge n^{3/2}$ and, using
Seidenberg's proof~\cite{Seidenberg} of the Erd\H{o}s--Szekeres theorem
together with the triangle-removal lemma of Ruzsa and
Szemer\'edi~\cite{Ruzsa-Szemeredi}, proved the first nontrivial upper
bound $A_2(n;3,2)=o(n^2)$. Gowers and Long~\cite{Gowers-Long} later improved this by showing that
there is an absolute constant $\varepsilon>0$ such that
$A_2(n;3,2)\le n^{2-\varepsilon}$,
and they conjectured that
$A_2(n;3,2)=\Theta(n^{3/2})$.

For higher uniformities, there is a simple general upper bound obtained
by grouping colors. Indeed, partition the $q$ colors into
$\lceil q/p\rceil$ classes, each containing at most $p$ colors. A
monochromatic monotone path in the induced $r$-coloring then corresponds
to a monotone path using at most $p$ of the original colors. Hence, for
$k\ge 3$,
\[
    A_k(n;q,p)
    \le
    MS_k(n;\lceil q/p\rceil)
    \le
    T_{k-1}\left(2n^{\lceil q/p\rceil}\right).
\]
This bound has the same tower height $k-1$ as the monochromatic problem.
Mulrenin, Pohoata, and Zakharov~\cite{Mulrenin-Pohoata-Zakharov} showed
that, when $p>q/2$, this tower height can be reduced by one.

\begin{theorem}[Mulrenin--Pohoata--Zakharov~\cite{Mulrenin-Pohoata-Zakharov}]
\label{thm:color-upperbound}
For all integers $n\ge k\ge 3$ and $q>p\ge 1$ with $p>q/2$, one has
\[
    A_k(n;q,p)
    \le
    T_{k-2}\left(
        n^{O\left(\binom{q}{p}^2
        \log \binom{q}{p}\right)}
    \right).
\]
\end{theorem}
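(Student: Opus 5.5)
The idea is to reduce the $k$-uniform problem with $p>q/2$ to a $(k-1)$-uniform problem with a larger number of colors, losing only one exponential. This is the usual Erd\H{o}s--Rado-type step in reverse, and it saves one level of the tower compared with $MS_k(n;\lceil q/p\rceil)$.

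\textbf{Step 1: a structural fact about $p$-subsets.} Since $p>q/2$, any two $p$-subsets of $[q]$ intersect. More usefully, a monotone path whose edges avoid some fixed set of $q-p$ colors is a good path. So we look for a color class $S\in\binom{[q]}{p}$ and a long monotone path all of whose edges have colors in $S$.

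\textbf{Step 2: define an auxiliary $(k-1)$-uniform coloring.} For a $(k-1)$-set $e=\{v_1<\dots<v_{k-1}\}$, consider the colors $\chi(e\cup\{w\})$ over the extensions $w>v_{k-1}$. We build a Seidenberg/Moshkovitz--Shapira-style labeling of the $(k-1)$-sets.
\begin{itemize}
\item For each $S\in\binom{[q]}{p}$, let $f_S(e)$ be the length of the longest monotone path ending at $e$ whose edges have colors in $S$. This is a vector of $\binom{q}{p}$ integers, each less than $n$.
\item If every such value stays below $n$, then for consecutive $(k-1)$-sets $e,e'$ in a path (sharing $k-2$ vertices), the color $c=\chi(e\cup e')$ lies in some $S$. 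Every $S\ni c$ forces $f_S(e')>f_S(e)$.
\end{itemize}
So $f$ gives a coloring of $[N]^{(k-1)}$ by at most $n^{\binom{q}{p}}$ ``states''. Two consecutive $(k-1)$-sets of the shadow path cannot share a state unless the connecting color lies outside every $S$, which is impossible.

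\textbf{Step 3: iterate and count.} The argument above only gives a Ramsey-type contradiction in uniformity $k-1$ with roughly $n^{\binom{q}{p}}$ colors, which naively costs a tower of height $k-2$ with a polynomial top. We follow the Moshkovitz--Shapira upper-bound recursion: the map $e\mapsto f(e)$ forces the $(k-1)$-uniform states to form a structure with no monotone ``non-increasing'' path of length $2$. This is counted by antichain-type bounds, giving $N\le T_{k-2}(\cdot)$ once we reach the graph case $k-1=2$. There we need a polynomial bound; the top exponent $\binom{q}{p}^2\log\binom{q}{p}$ should come from a dimension argument on vectors in $[n]^{\binom{q}{p}}$, namely that there are no long chains of the relevant kind.

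\textbf{Main obstacle.} The delicate point is the base case. We must show that in uniformity $3$ (equivalently, the graph-level state coloring) the number of vertices is only polynomial in $n$, with exponent $O(\binom{q}{p}^2\log\binom{q}{p})$, rather than exponential. This is exactly where $p>q/2$ must be used: the pairwise intersection of the classes gives a transitivity-like relation among states, which lets us bound the poset of states by products of chains. I would first try to prove that the induced order on states has dimension $O(\binom{q}{p}\log\binom{q}{p})$ and then apply a Dilworth/Erd\H{o}s--Szekeres bound in each coordinate.
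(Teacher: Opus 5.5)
There is a genuine gap: the step that carries the content of the theorem is not proved. What you establish in Steps~1--2 is correct: the labeling $f=(f_S)_{S\in\binom{[q]}{p}}$ of $(k-1)$-sets with values in $[n]^{\binom{q}{p}}$, and the fact that for consecutive $(k-1)$-sets $e,e'$ with $c=\chi(e\cup e')$ one has $f_S(e')>f_S(e)$ for every $S\ni c$. But it never uses $p>q/2$, and it holds verbatim for $p=1$, where the tower height is genuinely $k-1$ by Theorem~\ref{thm:M-S}.

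Your estimate of what Step~2 yields is also off by one level. Knowing only that consecutive $(k-1)$-sets receive distinct states out of $m=n^{\binom{q}{p}}$ is a proper coloring of the $(k-1)$-uniform shift graph. Its chromatic number is about $\log^{(k-2)}N$, so this gives only $N\le T_{k-1}(O(m))$ (for $k=3$, only $N\le 2^{O(m)}$), not ``height $k-2$ with a polynomial top''. Hence the entire saving of a tower level must come from Step~3. Step~3 is a plan rather than an argument: an appeal to ``the Moshkovitz--Shapira recursion'' and an intention to bound a dimension. The polynomial bound at uniformity $3$, which you correctly single out as the main obstacle, is exactly what the theorem asserts, and it is left open.

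The sketched route also runs into concrete obstructions. The relation $u\to v$ (``$v$ strictly exceeds $u$ on every $S$ containing some color $c$'') is not a partial order. For $q=3$, $p=2$ it is majority dominance on $[n]^3$, and $(1,2,3)\to(2,3,1)\to(3,1,2)\to(1,2,3)$. So there is no ``induced order on states'' whose dimension could be bounded, and the Moshkovitz--Shapira down-set recursion, which relies on transitivity, does not carry over as stated.

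Its natural replacement records, for each $(k-2)$-set, the set of states of its left extensions, paying one exponential per level. This saves nothing unless the first reduction produces only polynomially many types, and that is not automatic. Already for $q=3$, $p=2$, the regions $\{w:\ u\to w \text{ for all } u\in A\}$ with $A\subseteq[n]^2\times\{1\}$ encode arbitrary down-sets of $[n]^2$, so there are at least $\binom{2n}{n}$ of them. What is missing is an actual lemma that uses $p>q/2$ and the specific structure of the sets arising from $\chi$ to produce the bound $n^{O(\binom{q}{p}^2\log\binom{q}{p})}$.

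Note that this paper does not prove the statement; it is quoted from \cite{Mulrenin-Pohoata-Zakharov}. The one mechanism here that does save tower levels is the block compression of Lemma~\ref{lem:block-compression}. It works for every $p\ge 2$ and gives height $\lceil (k-1)/p\rceil\le k-2$, but with top exponent $q^p$, so for $k=3$ it does not recover the stated exponent.
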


Mulrenin, Pohoata, and Zakharov~\cite{Mulrenin-Pohoata-Zakharov}
asked for lower bounds for $A_k(n;q,p)$. They also singled out the case
$A_3(n;3,2)$, asking whether the known upper bound could be improved.
At the same time, they pointed out that the classical stepping-up lemma
of Erd\H{o}s and Hajnal~\cite{StepUp}, one of the standard tools for
proving tower-type lower bounds in hypergraph Ramsey theory, is difficult
to apply directly in this color-avoiding setting.

Our main result overcomes this difficulty and determines the correct
tower height of $A_k(n;q,p)$ for every fixed $p$, provided that $q$ is
sufficiently large.

\begin{theorem}\label{thm:main}
The following statements hold.

\begin{itemize}
\item[\rm (i)]
For every $q>p\ge 2$, every $k\ge 2$, and all sufficiently large $n$,
\[
    A_k(n;q,p)
    \le
    T_{\lceil (k-1)/p\rceil}\left(n^{q^p}\right).
\]

\item[\rm (ii)]
For every fixed $p\ge 1$, there exist an integer $q_p$ and a positive
function $\gamma_p(q)$ with $\gamma_p(q)\to\infty$ as $q\to\infty$ such
that the following holds for all $q\ge q_p$. For every $k\ge 2$ and all
sufficiently large $n$,
\[
    A_k(n;q,p)
    \ge
    T_{\lceil (k-1)/p\rceil}\left(n^{\gamma_p(q)}\right).
\]
\end{itemize}
\end{theorem}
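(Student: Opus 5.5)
For part (i), the plan is the block-compression argument the abstract promises. Set $m=\lceil (k-1)/p\rceil$ and $s=k-1$. Given a $q$-coloring $\chi$ of $[N]^{(k)}$, note that a monotone path on $v_1<\dots<v_{n+k-1}$ is determined by its consecutive $k$-tuples. I will compress $p$ consecutive edges into one edge of lower uniformity. Concretely, a monotone path of length $p$ in uniformity $k$ spans $k+p-1$ consecutive vertices. I want an auxiliary hypergraph of uniformity $k' = m+1$, roughly $k/p$, whose edges encode the relevant $k$-tuples.

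The cleaner route is to color each $k'$-set of \emph{positions} by the vector of colors of the $\le q^p$ patterns. I would restrict to vertex sets that are arithmetic-like blocks: split $[N]$ into consecutive blocks and choose the path so that the $k$-edges are determined by $m+1$ block-representatives. Each such "super-edge" then receives a color in $[q]^{p}$. A monochromatic monotone path in this auxiliary $(m+1)$-uniform hypergraph with $q^p$ colors has two properties:
\begin{itemize}
\item[(a)] it lifts to a monotone $k$-path;
\item[(b)] the lifted path's edges all have colors among the $p$ coordinates of the common vector.
\end{itemize}
Hence $A_k(n;q,p)\le MS_{m+1}(O(n);q^p)\le T_m(n^{q^p})$ by Theorem~\ref{thm:M-S}, up to the exponent bookkeeping. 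The delicate point is designing the lift so that every $k$-edge of the lifted path is one of the $p$ recorded ones. The cyclic structure of shifting by one vertex means each super-edge must record exactly $p$ "phases", which is what dividing $k-1$ by $p$ accomplishes.

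For part (ii), I would induct on $m=\lceil (k-1)/p\rceil$, increasing $k$ by $p$ per tower level via a new stepping-up lemma. The base case $k\le p+1$ needs a polynomial lower bound $n^{\gamma_p(q)}$. For this I would take a random or algebraic coloring of $[N]^{(k)}$ with $N=n^{\gamma}$ in which every path of length $n$ sees more than $p$ colors. Colors depending on few coordinates, e.g.\ colorings by $\lfloor \log \rfloor$-scale differences, can force many colors as $q$ grows.

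The stepping-up step goes from $N$ to $2^N$ vertices, identified with binary strings. For a $k$-set $x_1<\dots<x_k$ I form the $\delta$-sequence $\delta_i=\delta(x_i,x_{i+1})$, the first differing bit. The new color combines the old coloring of suitable $(k-p)$-subsets of the $\delta$'s with the local pattern of ups and downs of the $\delta$-sequence. Along a long monotone path the $\delta$-sequence is an infinite-like word. Either it contains long monotone runs, which project to a monotone path in the old coloring and so use more than $p$ colors, or its local pattern of length-$(p+1)$ windows takes few values. Finite Morse--Hedlund enters exactly here: a word with at most $p$ distinct factors of some length must be eventually periodic with small period. I then encode patterns so that a periodic $\delta$-sequence still forces more than $p$ colors.

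\textbf{Main obstacle.} The hardest step is the case analysis in the stepping-up. I must show that no window combination lets a long path use $\le p$ colors. This requires allocating the large color palette so that each of the $p$ "phases" of a periodic $\delta$-word gets a distinct color. The factor complexity bound is what caps how many colors such a path can reuse.
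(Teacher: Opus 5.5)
Your part (i) follows the paper's block-compression argument but leaves it unfinished. The ``delicate point'' disappears once each block consists of exactly $p$ consecutive vertices. Any $r=\lceil (k-1)/p\rceil+1$ blocks then contain $pr\ge p+k-1$ vertices, so you can color an $r$-set of blocks by the $p$-tuple of colors of the first $p$ consecutive $k$-sets in their union. Now expand a monochromatic $r$-uniform path of length $\lceil n/p\rceil$ back into vertices. The $k$-edge starting at offset $t\in[p]$ inside a block receives color $c_t$. This gives $A_k(n;q,p)\le p\,MS_r(\lceil n/p\rceil;q^p)$, and Moshkovitz--Shapira finishes.

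Part (ii) has a genuine gap, exactly at the step you call the main obstacle, and the plan you sketch there cannot work. After finite Morse--Hedlund, the pattern sequence is periodic with least period $d\le p$. So the pattern part of the color can only be guaranteed to separate the $d\le p$ residue classes of starting positions. ``A distinct color for each phase'' therefore gives at most $p$ colors, never $p+1$. For example, $\delta=1,2,1,3,1,4,\dots$ comes from an increasing sequence of binary vectors, and every window has one of just two patterns.

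You must therefore force several colors \emph{inside} each residue class, and your ingredients cannot do this. The only usable monotone structure is a residue class $\sigma$ along which the $\delta$'s are strictly monotone; this is where the unique maximum property is used. A window of $k-1$ consecutive $\delta$'s contains only about $(k-1)/d$ terms of this class, which is fewer than $k-p$ since $k\ge 2p+2$. So your old $p$-avoiding coloring of $(k-p)$-sets cannot be applied. A $p$-avoiding coloring of uniformity about $(k-1)/d$ would instead lose tower height.

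The missing idea is to induct on the color complexity as well as on $k$. For every $t\le p-1$ and $(s-2)t+2\le h\le k-1$, the paper fixes a $q$-coloring $\phi_{h,t}$ of $[M]^{(h)}$ with no monotone $P_n^{(h)}$ using at most $t$ colors. A non-monotone edge $V$ is then colored by its $(2p+1)$-pattern together with all values $\phi_{h,t}(\Delta_{\alpha,\beta}(V))$, over every possible period $\alpha\le p$ and phase $\beta\le\alpha$. Period and phase are not visible locally, hence ``all phases.''

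The counting goes as follows. Let $h_a$ be the number of class-$\sigma$ terms in a window starting in class $a$; then $\sum_a h_a=k-1$. This lets you choose $t_a\le\lfloor (h_a-2)/(s-2)\rfloor$ with $\sum_a t_a=p-d+1$. Edges starting in class $a$ must use at least $t_a+1$ colors, since otherwise $\phi_{h_a,t_a}$ would have a $t_a$-colored monotone path along $\sigma$. Different classes have different patterns, so the path sees at least $\sum_a(t_a+1)\ge p+1$ colors.

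The resulting recursion is $A_k(3pn;Q,p)\ge 2^M$, where $M+1$ is the minimum of $A_{k-1}(n;Q,p)$ and the terms $A_h(n;q,t)$ with $t<p$. Each term has tower height at least $s-1$, because $h-1>(s-2)t$. This is a double induction on $p$ and $k$ that raises $k$ by one, not an induction on $\lceil (k-1)/p\rceil$ with $p$ fixed.

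Smaller points:
\begin{itemize}
\item A random coloring only gives bounds linear in $n$. The base case needs a structured construction, such as the first-differing-coordinate coloring of $[m]^q$ in lexicographic order, giving $A_2(n;q,p)\ge (n^{1/p}-1)^q$, combined with monotonicity in the uniformity.
\item The all-phases lemma is only set up for $k\ge 2p+2$. The first tower level (uniformity $p+2$) is obtained separately, from a graph coloring avoiding $p\binom{p+1}{2}$-colored paths, with the position of the maximal $\delta$ recorded in the color.
\item Windows of length $p+1$ are too short. With them, finite Morse--Hedlund only makes the up/down sequence periodic, which does not control comparisons of $\delta_i$ with $\delta_{i+d}$. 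The paper uses windows of length $2p+1$, which make the $(p+2)$-pattern sequence periodic.
\end{itemize}
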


Theorem~\ref{thm:main} determines the
tower height of $A_k(n;q,p)$ for every fixed $p$ and all sufficiently
large $q$: it is exactly $\lceil (k-1)/p\rceil$.
Thus allowing a monotone path to use $p$ colors lowers the tower height
from $k-1$, the height in the monochromatic problem, to
$\lceil (k-1)/p\rceil$. The upper bound in
Theorem~\ref{thm:main}(i) follows from a short block-compression
argument. The main contribution of the paper is the matching lower bound
in Theorem~\ref{thm:main}(ii).

To prove this lower bound, we develop a new stepping-up method for
monotone paths of bounded color complexity. In the course of the proof,
we establish a finite analogue of the Morse--Hedlund
theorem~\cite{MorseHedlund}, a foundational result in symbolic dynamics
and combinatorics on words. The classical Morse--Hedlund theorem says
that if a one-sided infinite word has at most $m$ distinct contiguous
subwords of length $m$, for some $m$, then the word is eventually
periodic; see, for example, \cite[Theorem~4.3.1]{LindMarcus}. Our finite
version gives a robust periodicity principle for finite words, and this
principle is what makes the stepping-up construction possible. We expect
this finite Morse--Hedlund theorem to be useful in other problems
involving words and ordered combinatorial structures.


\subsection{High-level overview}\label{subsec:overview}

We now give an overview of the main ideas of the proof. The upper bound
is a short \emph{block-compression} argument. We partition the ordered
vertex set into blocks of size $p$ and define an auxiliary coloring on a
lower-uniformity ordered hypergraph. An edge of the auxiliary hypergraph
records the colors of $p$ consecutive $k$-edges in the original
hypergraph. A monochromatic monotone path in the auxiliary coloring then
expands to a monotone path in the original coloring whose edges use at
most $p$ colors. This reduces the relevant uniformity from $k$ to
$\lceil (k-1)/p\rceil+1$, giving the upper bound in
Theorem~\ref{thm:main}(i).

The main contribution of the paper is the matching lower bound in
Theorem~\ref{thm:main}(ii). Its proof is based on a color-avoiding
variant of the stepping-up lemma. As in the classical stepping-up
construction, an ordered edge in the binary cube is associated with a
\emph{$\delta$-sequence}, defined in Section~\ref{sec:Prelim}. When this
$\delta$-sequence is monotone, the coloring behaves essentially as in
the usual stepping-up lemma. Thus a long monotone path using few colors
would give a forbidden lower-uniformity monotone path. The main new
difficulty is the non-monotone case.

In the non-monotone case, our coloring records more local information
than in the classical construction. In particular, it records the local
pattern of the $\delta$-sequence, as well as auxiliary colors coming from
several lower-uniformity colorings. This information is arranged so that
if a long monotone path uses at most $p$ colors, then the associated
sequence of local patterns has small complexity. At this point we use a
finite analogue of the Morse--Hedlund theorem, which we prove in
Section~\ref{sec:finite-Morse-Hedlund}. Roughly speaking, this finite
Morse--Hedlund theorem says that a sufficiently long finite sequence with
few distinct contiguous blocks must become periodic after deleting only
a bounded number of terms from its two ends.

Applied to the sequence of local patterns arising from a hypothetical
color-avoiding monotone path, the finite Morse--Hedlund theorem forces a
periodic structure with some least period $d\le p$. This periodicity has
two crucial consequences. First, different residue classes modulo $d$
give genuinely different local patterns. Second, along at least one
residue class, the associated $\delta$-values form a strictly monotone
subsequence. The coloring is designed to record all possible phases of
this periodic structure, so that whichever period and phase arise, the
strictly monotone subsequence can be converted into a forbidden
lower-uniformity monotone path using strictly fewer colors.

Combining the monotone and non-monotone cases yields the stepping-up
lemma stated as Theorem~\ref{thm:main-stepup}. In the monotone case, the
uniformity decreases by one, as in the classical stepping-up lemma. In
the non-monotone case, the uniformity decreases by a factor of about
$p$, while the allowed color complexity also decreases. Under iteration, one therefore either applies the stepping-up lemma
repeatedly, raising the tower height, or eventually reaches color
complexity $1$, where Remark~\ref{rem:p=1-main-stepup} gives the usual
monochromatic stepping-up argument.

There is one additional endpoint needed to start the iteration. Theorem
\ref{thm:stepup-low-uniformity} steps up from a graph construction to
uniformity $p+2$ and gives the first exponential lower bound. The
all-phases stepping-up lemma, Theorem~\ref{thm:main-stepup}, then
applies from uniformity $2p+2$ onward and raises the tower height
thereafter. The intermediate uniformities are handled by monotonicity in
the uniformity. Together, these ingredients give the optimal tower-type
lower bounds in Theorem~\ref{thm:main}(ii).

\medskip
\noindent\textbf{Organization of the paper.}
The rest of the paper is organized as follows. In
Section~\ref{sec:Prelim}, we collect notation and recall the basic properties of the classical stepping-up construction that will be used
throughout the paper. In Section~\ref{sec:upper-proof}, we prove the upper bound in Theorem~\ref{thm:main}(i). In Section~\ref{sec:finite-Morse-Hedlund}, we prove the finite Morse--Hedlund theorem and derive the structural consequence for $\delta$-sequences needed later. Section~\ref{sec:step-up} contains the
two stepping-up lemmas that form the core of the lower-bound argument. In Section~\ref{sec:lower-bound}, we combine these stepping-up lemmas
with the graph lower bound and monotonicity in the uniformity to prove Theorem~\ref{thm:main}(ii). We finish with concluding remarks and open
problems.

\section{Preliminaries}\label{sec:Prelim}

\subsection{Notation}

Throughout the remaining paper, we use standard notation in graph theory. For the purpose of finding a monotone path, which is a certain ordered hypergraph on distinct integers, we identify hyperedges as sequences of integers if necessary. Then we distinguish hyperedges using the information in the corresponding sequences. To this end, we define several notations for sequences.  

Let $S$ be a sequence over an alphabet $\calA$, not necessarily finite. For a positive integer $p$, we say a contiguous subsequence of length $p$ in $S$ is a \emph{$p$-block} of $S$ and write $\calB(S; p)$ for the set of $p$-blocks in $S$.
We denote by $\calC(S; p)$ the number of distinct $p$-blocks in $S$, equivalently the cardinality of $\calB(S; p)$, and call it \emph{$p$-complexity} of $S$. For instance, if $S = (a, b, 1, c, b, 1)$ then $\calB(S; 2) = \{(a, b), (b, 1), (1, c), (c, b)\}$ and $\calC(S; 2) = 4$. 

For a sequence $X = (x_1,\dots,x_p)$ over a totally ordered alphabet (e.g., the integers), we define the \emph{pattern} of $X$ to be the tuple $(\ve_{ij})_{1\leq i<j\leq p}$, where $\ve_{ij}$ is the unique symbol in $\{<,=,>\}$ satisfying $x_i\,\ve_{ij}\,x_j$. For example, $(4,7,7,2)$ and $(3,5,5,1)$ have the same pattern. 
Let $S = (s_1, s_2, \dots, )$ be a sequence over a totally ordered alphabet and let $p$ be a positive integer. We say a sequence $ (P_1, P_2, \dots, )$ is a \emph{$p$-pattern sequence} of $S$ if $P_i$ is a pattern of $(s_i, s_{i+1}, \dots, s_{i+p-1})$ for each $i$. Also, we denote by $\calP(S;p)$ the number of distinct patterns in the $p$-pattern sequence of $S$.
For example, the $2$-pattern sequence of $S=(1, 2, 2, 4, 3)$ is $( (<), (=), (<), (>) )$ and $\calP(S;2)=3$. 

Lastly, for a hypergraph $H$ and a set $M$ of size $q$, we say $\chi: E(H)\to M$ is a \emph{$q$-coloring} of $H$ and $M$ is the \emph{palette} of $\chi$.

\subsection{The classical stepping-up lemma} 

Since the lower-bound argument in this paper is a color-avoiding variant of the classical stepping-up method of Erd\H{o}s and Hajnal~\cite{StepUp}, we begin by recalling the basic setup of the stepping-up construction and
the properties of the associated $\delta$-sequences that will be used later.

The stepping-up lemma is a fundamental tool for deriving lower bounds on hypergraph Ramsey numbers from lower-uniformity constructions. Let
$R_k(t;q)$ denote the $q$-color Ramsey number of the complete $k$-uniform hypergraph on $t$ vertices; equivalently, $R_k(t;q)$ is the
least integer $m$ such that every $q$-coloring of $[m]^{(k)}$ contains a monochromatic copy of $[t]^{(k)}$. The Erd\H{o}s--Hajnal stepping-up
lemma~\cite{StepUp} states that, for $t\ge k\ge3$ and $q\ge2$,
\begin{equation}\label{eq:classic-stepup}
    R_{k+1}(2t+k-4;q) \ge 2^{R_k(t;q)-1}.
\end{equation}

We next recall the notation underlying the proof of \eqref{eq:classic-stepup}. This notation, based on $\delta$-sequences, is central to the stepping-up construction and to many of its variants.

\begin{definition}
    For an integer $m \geq 1$ and two distinct vectors $u, v \in \{0, 1\}^m$, define
    $$
        \delta(u, v) \defeq \max\{i\in [m]: (u)_i \neq (v)_i\}.
    $$
\end{definition}

Here and throughout the rest of the paper, we equip $\{0,1\}^m$ with the reverse lexicographic order, denoted by $<$. Thus, for two distinct vectors $u,v\in \{0,1\}^m$, we write $u<v$ if and only if $(u)_{\delta} < (v)_{\delta}$, where $\delta = \delta(u, v)$. With respect to this order, a sequence of binary vectors $v_1,\ldots,v_{\ell}$ is called \emph{monotone} if
$v_1 < \cdots < v_{\ell}$ or $v_1 > \cdots > v_{\ell}$.

For a sequence of vectors $S =v_1,\ldots, v_{\ell}$ in $\{0, 1\}^m$, with no two consecutive vectors equal, we define $\Delta(S) = \{\delta_i\}_{i=1}^{\ell - 1}$, where $\delta_i = \delta(v_i, v_{i+1})$ for each $i \in [\ell - 1]$. We call $\Delta(S)$ the \emph{$\delta$-sequence} of $S$. For $V=\{v_1,\dots,v_k\}\subseteq\{0,1\}^m$, let $\overrightarrow{V}=(v_{i_1},\dots,v_{i_{k}})$ be the sequence obtained by arranging the elements of $V$ in increasing order, that is, $v_{i_1}<v_{i_2}<\dots<v_{i_k}$. We also write $\Delta(V)$ for $\Delta(\overrightarrow{V})$.

The basic idea behind the proof of \eqref{eq:classic-stepup} is the
following. Set $m=R_k(t;q)-1$, and fix a $q$-coloring $\chi$ of $[m]^{(k)}$ with no monochromatic copy of $[t]^{(k)}$. Given a
$(k+1)$-edge in $\{0,1\}^m$, order its vertices increasingly and consider the associated $\delta$-sequence. Since the entries of this
sequence lie in $[m]$, one can color the $(k+1)$-edge using the $\delta$-sequence together with the coloring $\chi$. With an appropriate
choice of the coloring rule, the resulting coloring of $(\{0,1\}^m)^{(k+1)}$ contains no monochromatic copy of $[2t+k-4]^{(k+1)}$.

Many hypergraph Ramsey lower bounds are obtained by variants of this construction. A key
property used in such arguments is that $\delta$-sequences arising from monotone sequences of binary vectors have the \emph{unique maximum
property}: every contiguous subsequence has a unique maximum.

\begin{proposition}\label{prop:UMP}
    Let $S = \{v_i\}_{i=1}^{\ell}$ be a monotone sequence of vectors in $\{0, 1\}^m$. Then the $\delta$-sequence of $S$ has the unique maximum property.
\end{proposition}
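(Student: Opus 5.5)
We will prove the statement by contradiction, after first reducing to the increasing case. Suppose $v_1>\cdots>v_\ell$. Reversing the sequence gives an increasing sequence whose $\delta$-sequence is the reverse of the original one, since $\delta$ is symmetric. The unique maximum property is preserved under reversal. So we may assume $v_1<v_2<\cdots<v_\ell$ in the reverse lexicographic order.

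The key step is a transitivity identity for $\delta$. For any three distinct vectors $u,v,w$ we have $\delta(u,w)\le\max\{\delta(u,v),\delta(v,w)\}$, and if $\delta(u,v)\neq\delta(v,w)$ then equality holds. This is the usual ultrametric property: every coordinate above both values agrees across all three vectors. Iterating it along a contiguous segment gives, for $i<j$,
\[
\delta(v_i,v_j)\le \max\{\delta_i,\delta_{i+1},\dots,\delta_{j-1}\}.
\]

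Now suppose some contiguous block $\delta_a,\dots,\delta_b$ attains its maximum value $M$ at two positions $a\le r<s\le b$. Among all such pairs, choose $r<s$ consecutive occurrences of $M$, so that every $\delta_t$ with $r<t<s$ is strictly less than $M$. Consider the coordinate $M$ in the vectors $v_r,v_{r+1},v_{s},v_{s+1}$.
\begin{itemize}
\item Since $v_r<v_{r+1}$ and $\delta(v_r,v_{r+1})=M$, we get $(v_r)_M=0$ and $(v_{r+1})_M=1$.
\item For $r<t<s$ we have $\delta_t<M$, so coordinate $M$ does not change from $v_{r+1}$ to $v_s$. Hence $(v_s)_M=1$.
\item Since $\delta_s=M$ and $v_s<v_{s+1}$, we need $(v_s)_M=0$. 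This contradicts the previous point.
\end{itemize}
Therefore every contiguous subsequence has a unique maximum.

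There is no serious obstacle here. The only care needed is to pick the two occurrences of the maximum consecutively, so that the coordinate $M$ is frozen between them, and to use the correct direction of the order, namely that $u<v$ means $(u)_\delta=0$ and $(v)_\delta=1$. In fact the displayed ultrametric bound is not even needed. The argument uses only the definition of $\delta$ as the largest differing coordinate: if all intermediate $\delta_t<M$, then coordinates $\ge M$ stay fixed between $v_{r+1}$ and $v_s$.
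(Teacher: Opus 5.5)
Your argument is correct. Choosing two consecutive occurrences of the block maximum $M$ freezes coordinate $M$ from $v_{r+1}$ to $v_s$, which is incompatible with $\delta_s=M$ and $v_s<v_{s+1}$. The paper does not prove this proposition itself but defers to Graham--Rothschild--Spencer. Their standard argument combines two facts: $\delta(u,v)\neq\delta(v,w)$ for $u<v<w$, and $\delta(v_i,v_j)=\max\{\delta_i,\dots,\delta_{j-1}\}$ for $i<j$. Your coordinate-freezing step is exactly these two facts fused at the single coordinate $M$, so it is essentially the same approach.
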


Proposition~\ref{prop:UMP} is standard in hypergraph Ramsey theory and
will play a crucial role in the proofs of our stepping-up lemmas,
Theorems~\ref{thm:stepup-low-uniformity} and~\ref{thm:main-stepup}.
For a proof, we refer the reader to~\cite{UMP}.


\section{Proof of the upper bound}\label{sec:upper-proof}

In this section, we prove Theorem~\ref{thm:main}(i). The argument is a simple block-compression: we group consecutive vertices into blocks of
size $p$, and let each edge of a lower-uniformity auxiliary hypergraph
record the colors of $p$ consecutive $k$-edges in the original hypergraph. The following lemma gives the precise reduction.

\begin{lemma}\label{lem:block-compression}
Let $k,q,p,n$ be positive integers with $k\geq 2$. Then 
$$
    A_k(n;q,p)\leq p\, MS_{\lceil (k-1)/p\rceil+1}(\lceil n/p\rceil; q^p).
$$
\end{lemma}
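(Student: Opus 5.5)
Set $r\defeq\lceil (k-1)/p\rceil$, $M\defeq MS_{r+1}(\lceil n/p\rceil;q^p)$ and $N\defeq pM$. Fix a $q$-coloring $\chi$ of $[N]^{(k)}$ with palette $C$, $|C|=q$. The goal is an auxiliary coloring $\psi$ of $[M]^{(r+1)}$ with palette $C^p$ (so $q^p$ colors) such that a monochromatic $P^{(r+1)}_{\lceil n/p\rceil}$ for $\psi$ yields a monotone path of length at least $n$ in $\chi$ using at most $p$ colors.

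\textbf{Blocks and auxiliary coloring.} Cut $[N]$ into consecutive blocks $B_j=\{(j-1)p+1,\dots,jp\}$, $j\in[M]$. Given $j_1<\dots<j_{r+1}$ in $[M]$, concatenate $B_{j_1},\dots,B_{j_{r+1}}$ into an increasing sequence $w_1<\dots<w_{(r+1)p}$. For $s=1,\dots,p$ let $e_s=\{w_s,\dots,w_{s+k-1}\}$. These are $p$ consecutive edges of a monotone path. They fit inside the concatenation because $s+k-1\le p+k-1\le p+rp$, using $k-1\le rp$. Set
\[
\psi(\{j_1,\dots,j_{r+1}\})\defeq\bigl(\chi(e_1),\dots,\chi(e_p)\bigr)\in C^p.
\]
The key feature is that edge $e_s$ starts at the $s$-th vertex of the first block $B_{j_1}$. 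So the $p$ edges recorded by one auxiliary edge are exactly the edges of the expanded path whose first vertex lies in $B_{j_1}$.

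\textbf{Expansion.} Take a monochromatic monotone path of length $L=\lceil n/p\rceil$ in $\psi$ on $j_1<\dots<j_{L+r}$, all its edges having color $(c_1,\dots,c_p)$. Concatenate $B_{j_1},\dots,B_{j_{L+r}}$ into an increasing sequence $u_1<u_2<\cdots$ of length $(L+r)p$. Consider the $k$-uniform monotone path on these vertices with edges $f_t=\{u_t,\dots,u_{t+k-1}\}$ for $t=1,\dots,Lp$.
\begin{itemize}
\item \emph{These edges exist.} We need $Lp+k-1\le (L+r)p$, which again follows from $k-1\le rp$.
\item \emph{Each edge is recorded by one auxiliary edge.} Write $t=(i-1)p+s$ with $i\in[L]$ and $s\in[p]$. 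The window $u_t,\dots,u_{t+k-1}$ lies inside the concatenation of $B_{j_i},\dots,B_{j_{i+r}}$. That concatenation is the vertex sequence of the $i$-th edge of the $\psi$-path, and $f_t$ is its $e_s$.
\end{itemize}
Hence $\chi(f_t)=c_s$, so the path $f_1,\dots,f_{Lp}$ uses at most the $p$ colors $c_1,\dots,c_p$. Its length is $Lp\ge n$, and truncating it gives the required path. Therefore $A_k(n;q,p)\le N$.

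\textbf{Main obstacle.} There is no real difficulty here; the only care needed is the indexing. One must check that the edge starting at position $s$ of block $i$ is the same under both viewpoints: as $e_s$ of the $i$-th auxiliary edge, and as $f_{(i-1)p+s}$ of the expanded path. One must also check that $r=\lceil (k-1)/p\rceil$ is exactly enough for every window of $k$ consecutive vertices to fit in $r+1$ consecutive blocks. If $M$ is smaller than $r+1$ the statement is trivial, since the right-hand side is then at least the relevant value. It may also be necessary to treat $n<p$ separately, but the same argument covers it, since $L=1$ already gives a path of length $p\ge n$.
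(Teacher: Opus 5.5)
Your proposal is correct and is essentially the paper's block-compression proof. You use blocks of size $p$, an auxiliary $(\lceil (k-1)/p\rceil+1)$-uniform coloring by the $p$-tuple of colors of the edges starting in the first block, and an expansion of a monochromatic path of length $\lceil n/p\rceil$ to one of length $p\lceil n/p\rceil\ge n$; the only difference is notation ($r$ versus $r+1$).

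Your hedge about $M<r+1$ is unnecessary, since any monochromatic $P^{(r+1)}_{L}$ needs $L+r$ vertices, so $M\ge L+r$ automatically.
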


\begin{proof}[Proof of Lemma~\ref{lem:block-compression}]
    Let $r = \lceil (k-1)/p\rceil+1$, $m = \lceil n/p\rceil$, and $M = MS_{r}(m;q^p)$. Suppose we are given an arbitrary $q$-coloring $\chi$ of $[pM]^{(k)}$. We will find a monotone path of length $n$ using at most $p$ colors.

    Split $[pM]$ into $M$  blocks $B_1,\ldots,B_M$, each of size $p$, where $B_i=\{(i-1)p+1,\ldots, ip\}$. We define a new coloring $\psi$ on $[M]^{(r)}$. Take an $r$-edge $\{i_1<\cdots<i_r\}$, and list the vertices in $B_{i_1}\cup\cdots\cup B_{i_r}$ as $x_{1}<\cdots<x_{pr}$. The choice of $r$ gives $pr\ge p+k-1$.  Therefore the $p$ consecutive $k$-sets $\{x_{1},\ldots,x_{k}\}, \ldots , \{x_{p},\ldots,x_{p+k-1}\}$ all fit inside this union of $r$ blocks.  Color the $r$-edge $\{i_1<\cdots<i_r\}$ by the ordered $p$-tuple $(\chi(x_{1}, \ldots,x_{k}),\ldots, \chi(x_{p}, \ldots,x_{p+k-1}))$.  This is a coloring with at most $q^p$ colors.

    By the definition of $M$, the coloring $\psi$ contains a monochromatic monotone path of length $m$.  Thus there are block indices $i_1<\cdots<i_{m+r-1}$ such that every consecutive $r$-tuple $i_j,\ldots,i_{j+r-1}$ has the same $\psi$-color. Call this common color $(c_1,\ldots,c_p)$.

    Now expand the selected blocks back into the vertices of the original hypergraph.  List all vertices in $B_{i_1}\cup\cdots\cup B_{i_{m+r-1}}$ as $y_1<\cdots<y_{p(m+r-1)}$.  We claim that the first $pm$ consecutive $k$-edges in this $y$-sequence use only the colors $c_1,\ldots,c_p$. Indeed, an edge starting at position $p(j-1)+t$, with $1\le j\le m$ and $1\le t\le p$, lies inside the union of the $r$ blocks $B_{i_j},\ldots,B_{i_{j+r-1}}$, because $t+k-1\le p+k-1\le pr$.  By the definition of $\psi$, its color is $c_t$.
    Therefore, these vertices form a monotone path of length $pm$ using only the colors $c_1,\ldots,c_p$. Since $pm\ge n$, the first $n$ edges give the desired path. Hence $A_k(n;q,p)\le pM$. This completes the proof.
\end{proof}

\begin{proof}[Proof of Theorem~\ref{thm:main}(i)]
    Put $r=\lceil(k-1)/p\rceil+1$ and $m=\lceil n/p\rceil$.  By Lemma~\ref{lem:block-compression}, we have $A_k(n;q,p)\leq p\,MS_r(m;q^p)$. If $r=2$, the graph Erd\H{o}s--Szekeres bound gives $MS_2(m;q^p)\leq m^{q^p}+1$, and hence $A_k(n;q,p)\leq n^{q^p}$ for all sufficiently large $n$. If $r \geq 3$, Theorem~\ref{thm:M-S} gives $MS_r(m;q^p)\leq T_{r-1}(2m^{q^p})$ for all sufficiently large $m$.
    Since $r-1=\lceil(k-1)/p\rceil$, the factor $p$ and the constants inside the bottom of the tower are harmless for sufficiently large $n$. Therefore $A_k(n;q,p)\le T_{\lceil(k-1)/p\rceil}(n^{q^p})$. This completes the proof.
\end{proof}

\section{Finite Morse--Hedlund theorem}\label{sec:finite-Morse-Hedlund}

In this section, we introduce and prove a finite analogue of the
celebrated Morse--Hedlund theorem. For a positive integer $r$, an
infinite sequence $S=\{s_i\}_{i\in\mathbb N}$ is \emph{periodic with
period $r$} if $s_i=s_{i+r}$ for all $i$. Similarly, a finite sequence
$S=\{s_i\}_{i=1}^n$ is \emph{periodic with period $r$} if
$s_i=s_{i+r}$ for all $i\in[n-r]$. We say that an infinite sequence is
\emph{eventually periodic} if deleting finitely many initial terms leaves
a periodic sequence.

\begin{theorem}[Morse--Hedlund theorem~\cite{MorseHedlund}]\label{thm:M-H}
    Let $S$ be an infinite sequence over a finite alphabet. Then $S$ is eventually periodic if and only if there exists a positive integer $p$ such that $\calC(S;p)\leq p$.
\end{theorem}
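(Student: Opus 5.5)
We prove both directions of Theorem~\ref{thm:M-H}, using the Rauzy-graph (factor complexity) argument.

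\emph{Periodic implies low complexity.} Suppose $S$ is eventually periodic: after deleting the first $t$ terms it has period $r$. Then the $p$-blocks of $S$ starting at positions $i>t$ are determined by $i \bmod r$, so there are at most $r$ of them. Every block starting at a position $i\le t$ is one of at most $t$ further blocks. Hence $\calC(S;p)\le t+r$ for every $p$, so $p=t+r$ satisfies $\calC(S;p)\le p$.

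\emph{Low complexity implies periodic.} Assume $\calC(S;p)\le p$ for some $p\ge 1$. The function $n\mapsto \calC(S;n)$ is nondecreasing, since every $n$-block extends to the right to at least one $(n+1)$-block and distinct $n$-blocks have distinct extensions. Also $\calC(S;1)\ge 1$, and if $\calC(S;1)=1$ then $S$ is constant and we are done. Otherwise $\calC(S;1)\ge 2$. A nondecreasing integer function with $\calC(S;1)\ge 2$ and $\calC(S;p)\le p$ cannot increase strictly at every step from $1$ to $p$, so some $n<p$ satisfies $\calC(S;n+1)=\calC(S;n)$. This pigeonhole step is the key point.

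For this $n$, the right-extension map from $(n+1)$-blocks to $n$-blocks is a surjection between finite sets of equal size, hence a bijection. So every $n$-block $w$ occurring in $S$ has a unique right extension $wa$. Consequently the $n$-block starting at position $i+1$ is a fixed function $f$ of the $n$-block starting at position $i$: take the unique extension $wa$ and drop its first letter. The sequence of $n$-blocks $B_1,B_2,\dots$ therefore satisfies $B_{i+1}=f(B_i)$ and takes only finitely many values. So there exist $i<j$ with $B_i=B_j$, and then $B_{i+s}=B_{j+s}$ for all $s\ge 0$. Reading off the first letters gives $s_{i+s}=s_{j+s}$, so $S$ is periodic from position $i$ on with period $j-i$.

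The main obstacle is the monotonicity and pigeonhole step that produces an $n$ with $\calC(S;n+1)=\calC(S;n)$. Once that equality is in hand, the deterministic-successor argument above finishes the proof. Note that finiteness of the alphabet is not actually needed here, since $\calC(S;1)\le p$ already forces finitely many letters to occur.
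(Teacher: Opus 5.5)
Your proof is correct. It uses the same mechanism the paper sketches just before Theorem~\ref{thm:finite-M-H}; the paper only cites this theorem and does not prove it. That mechanism is monotonicity of $n\mapsto\calC(S;n)$, a pigeonhole step giving $\calC(S;n)=\calC(S;n+1)$, and then unique right extensions (out-degree one in the block digraph, as in Case~\ref{case:1} of the finite proof) forcing the block sequence to cycle. You also correctly handle the easy converse via $\calC(S;t+r)\le t+r$, which the paper does not discuss.
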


We prove the following finite counterpart of Theorem~\ref{thm:M-H}. In
the infinite setting, small block complexity forces eventual
periodicity. For finite sequences, endpoint effects cannot be avoided:
even if the number of distinct blocks is small, one may have to delete
terms from both ends before periodicity appears. Theorem~\ref{thm:finite-M-H}
shows that this is the only obstruction, with a bound depending only on
the complexity parameter.


Let $a,b$ be non-negative integers and let $S$ be a finite sequence over
an alphabet. We define an \emph{$(a,b)$-reduction} of $S$ to be the
sequence obtained from $S$ by deleting the first $a$ elements and the
last $b$ elements of $S$.

\begin{theorem}\label{thm:finite-M-H}
Let $n, m, p$ be positive integers such that $n \geq p + m$ and $m \geq p$. Let $S=\{s_i\}_{i=1}^n$ be a finite sequence over an alphabet. If $\calC(S;m)\le p$, then there exist non-negative integers $a, b \leq 2p$ such that the $(a, b)$-reduction of $S$ is periodic with period at most $p$. 
\end{theorem}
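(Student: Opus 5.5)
We follow the classical Rauzy-graph proof of the Morse--Hedlund theorem, keeping track of how many positions are lost at each end. For $1\le j\le m$ let $c_j=\calC(S;j)$; since $n\ge p+m>j$, every $j$-block extends to the right to a $(j+1)$-block, so $c_1\le c_2\le\cdots\le c_m\le p$. First we find a length $\ell\le m$ at which the complexity stops growing. If $c_1=1$, then $S$ is constant and has period $1$. Otherwise $c_1\ge2$, and the $m\ge p$ values $c_1\le\cdots\le c_m$ lie in $[2,p]$, so two consecutive values must coincide. Hence there is $\ell<m$ with $c_\ell=c_{\ell+1}\le p$. In fact we may take $\ell\le p-1$, because the $p$ values $c_1,\dots,c_p$ lie in a set of size $p-1$.

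Next we use $c_\ell=c_{\ell+1}$. Consider the $\ell$-blocks that occur at a starting position $i\le n-\ell$, i.e.\ those that have a right extension inside $S$. There are at most $c_\ell$ of them, and the map from $(\ell+1)$-blocks to their prefixes is surjective onto this set. If at most one $\ell$-block fails to occur at such a position (namely possibly the final block $(s_{n-\ell+1},\dots,s_n)$), the counting gives the following: every $\ell$-block $w$, except possibly one special block, has a \emph{unique} right extension. The symmetric statement holds for left extensions, again with at most one exception (the initial block).

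We then follow the walk in the Rauzy-type graph on the $\le p$ distinct $\ell$-blocks, where position $i$ is sent to position $i+1$. Away from the two special blocks, the successor of a block is determined by the block itself. So once the walk revisits some block without passing through a special block, the sequence from that point on is periodic, with period equal to the cycle length, which is at most $c_\ell\le p$.

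The remaining task is to show that the special blocks can only occur near the ends. The key observation is that the unique non-right-extendable block is the final block $(s_{n-\ell+1},\dots,s_n)$. If it had a unique right extension elsewhere, or appeared earlier, the deterministic successor rule would force the final segment to repeat; in other words, an exceptional occurrence in the middle can only happen within the first or last few steps. Concretely, consider the sequence of $\ell$-blocks at positions $1,\dots,n-\ell+1$. By pigeonhole, some block repeats within any $p+1$ consecutive positions. We will argue that a block with two different right extensions can occur only before the first cycle is closed, so within the first $p$ positions. By the symmetric argument applied to left extensions, the block with two left extensions can occur only within the last $p$ positions. After deleting $a\le p+\ell\le 2p$ terms at the start and $b\le 2p$ terms at the end, what remains is generated by the deterministic rule and is therefore periodic with period at most $p$. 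The length condition $n\ge p+m$ guarantees that enough positions are left for this to make sense.

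The main obstacle is this last step: precisely controlling where a branching block can occur and turning that control into the bound $a,b\le 2p$. The worry is that a branching block could recur in the middle of $S$. To rule this out, we plan to argue that between two consecutive occurrences of the branching block the walk is deterministic, so the segments following those occurrences coincide. A branching therefore contributes at least one new $(\ell+1)$-block, and hence at least one new $\ell$-block. Counting with $c_{\ell+1}=c_\ell$ shows that this can happen only once, and only at an end. If this counting turns out to be too tight, the fallback is to choose $\ell$ with $c_\ell=c_{\ell+1}=c_{\ell+2}$, which is available when $m$ is large relative to $p$. Otherwise we accept the slightly weaker end losses $a,b\le p+\ell$ and use $\ell\le p$ to bound them by $2p$.
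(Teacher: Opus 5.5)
Your overall strategy (a Rauzy-type digraph on $\ell$-blocks, an $\ell$ with $\calC(S;\ell)=\calC(S;\ell+1)$, and near-uniqueness of right extensions) is the same as the paper's, but two steps fail as written.

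\textbf{Monotonicity of complexity.} You justify $c_1\le\cdots\le c_m$ by saying that every $j$-block extends to the right. This is false for finite sequences. The final block $(s_{n-j+1},\dots,s_n)$ may occur only at the end, as the block $2$ does in $(1,\dots,1,2)$. In fact $\calC(S;1)>\calC(S;2)$ for $S=(1,2,\dots,n)$. Under the hypothesis the monotonicity is true, but it needs a real argument, such as the paper's:
\begin{itemize}
\item Let $D_j$ be the digraph on $j$-blocks whose arcs are the $(j+1)$-blocks. The walk traced by $S$ covers every vertex and arc, so $D_j$ is weakly connected.
\item Since $\calC(S;j)\le\calC(S;m)+(m-j)\le p+m-j<n-j+1$, the walk must repeat a vertex. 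This gives a directed cycle, and hence $|E(D_j)|\ge|V(D_j)|$.
\end{itemize}
Without this you do not obtain your $\ell$.

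\textbf{Localization of the branching.} You flag this step as the main obstacle, but it is not merely unfinished: the claims you plan to prove are false.
\begin{itemize}
\item A block with two right extensions can occur all along $S$. Take a long $S=(1,2,1,2,\dots,1,2,3)$ and $\ell=1$. Here $c_1=c_2=c_3=3$, so the hypotheses hold with $m=p=3$. The block $2$ has right extensions $21$ and $23$ and sits at every even position.
\item Symmetrically, in $(3,1,2,1,2,\dots,1,2)$ the block $1$ has two left extensions and occurs throughout.
\item The directions are also reversed: right branching produces the defect at the \emph{end}, not the start.
\item The same example shows that your fallback $c_\ell=c_{\ell+1}=c_{\ell+2}$ does not eliminate branching. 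Nor is such an $\ell$ guaranteed when $m=p$.
\end{itemize}

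What is localized is not the branching block but the \emph{use of its second extension}. Proving this requires the exact structure forced by $|E(D_\ell)|=|V(D_\ell)|$. There are two cases.
\begin{itemize}
\item All out-degrees equal $1$. Then the walk is deterministic, some block repeats among the first $p+1$ positions, and an $(a,0)$-reduction with $a\le p$ is periodic.
\item The final block $v$ has out-degree $0$, so it occurs only once, and exactly one block $w$ has out-degree $2$. Let $t$ be the last occurrence of $w$. No block repeats after position $t$: a repetition would either trap the walk in a cycle of out-degree-one vertices avoiding $v$, or force another visit to $w$ after $t$. Hence $n-\ell+1-t\le p$. The arc leaving $w$ at time $t$ is used only once, so the walk truncated at $t$ is deterministic. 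Applying the first case to it gives the bound on $b$.
\end{itemize}

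Your counting of ``new blocks'' does not determine at which end the branch occurs, or bound its distance from that end. The left-extension analysis is also unnecessary: the initial defect is simply the pre-period before the first repetition.
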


The need to delete terms from both ends is a genuine finite phenomenon,
rather than an artifact of the proof. For instance, let
$S=(1,2,2,\ldots,2,1)$, with sufficiently many copies of $2$ in the
middle. Then $\calC(S;3)=3$, but the terminal $1$ prevents any long
suffix obtained by deleting only initial terms from being periodic with
period at most $3$. Thus, in a finite version of Morse--Hedlund, one
must allow endpoint effects at both ends of the sequence.


We next indicate the main difficulty in adapting the usual proof of the
Morse--Hedlund theorem. For an infinite sequence $S$, one always has
$\calC(S;m)\le \calC(S;m+1)$
for every $m\ge1$. Hence, if $\calC(S;p)\le p$ and
$\calC(S;1)\ge2$, then
$2\le \calC(S;1)\le \calC(S;2)\le \cdots \le \calC(S;p)\le p$.
By the pigeonhole principle, there is some $\ell<p$ such that
$\calC(S;\ell)=\calC(S;\ell+1)$. This equality implies that every
$\ell$-block has a unique right extension, and this unique-extension
property is the mechanism that forces eventual periodicity.

For finite sequences, both parts of this argument can fail. First,
$\calC(S;\ell)$ need not be nondecreasing in $\ell$: for example, if
$S=(1,2,\ldots,n)$, then $\calC(S;1)=n$ whereas
$\calC(S;2)=n-1$. Second, even equality
$\calC(S;\ell)=\calC(S;\ell+1)$ does not by itself imply unique
extension, because of endpoint effects. For instance, for
$S=(1,1,1,2)$ we have $\calC(S;2)=\calC(S;3)=2$, but the block
$(1,1)$ has two possible right extensions, namely $(1,1,1)$ and
$(1,1,2)$.

The proof below overcomes these finite obstructions by encoding the
$\ell$-blocks of $S$ as vertices of a directed graph and the
$(\ell+1)$-blocks as directed edges. The sequence $S$ then determines a
walk in this graph. The assumption $\calC(S;m)\le p$ forces, for some
$\ell<m$, a situation in which this walk is essentially cyclic, except
possibly near its two endpoints. Removing a bounded number of terms from
the two ends eliminates these endpoint effects and leaves a periodic
sequence of period at most $p$.

\begin{proof}[Proof of Theorem~\ref{thm:finite-M-H}]

If $m = 1$, the sequence $S$ consists of a single alphabet and its $(0, 0)$-reduction is periodic. Thus, we may assume that $m > 1$.

Despite the inequality $\calC(X; \ell) \leq \calC(X; \ell+1)$ not holding for general finite sequences $X$, we claim that under the assumption $\calC(S; m) \leq p$, the inequality holds for all $\ell \in [m]$.
For each $\ell \in [m]$, we define a digraph $D_{\ell}$, which allows loops, on $\calB(S; \ell)$ in which for $x,y\in \calB(S; \ell)$ there is an arc from $x$ to $y$ if and only if there is an $(\ell + 1)$-block $z\in \calB(S; \ell+1)$ whose prefix of length $\ell$ and suffix of length $\ell$ form $x$ and $y$, respectively. Also, we define a directed walk $\calW_{\ell} \defeq (w^{(\ell)}_1, w^{(\ell)}_2, \dots, w^{(\ell)}_{n-\ell + 1})$ on $D_{\ell}$ such that $w^{(\ell)}_i$ is the $\ell$-block $(s_{i}, \dots, s_{i + \ell - 1})$ for each $i\in [n - \ell + 1]$. We remark that $\calW_{\ell}$ is indeed a directed walk, that is, $w^{(\ell)}_i w^{(\ell)}_{i+1} \in E(D_{\ell})$ for each $i\in [n-\ell]$ by the definition of $D_{\ell}$.  

\begin{claim}\label{clm:monotonic}
    For any $\ell \in [m-1]$, we have $\calC(S; \ell) \leq \calC(S; \ell+1)$.   
\end{claim}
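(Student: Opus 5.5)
The plan is to compare $\ell$-blocks with $(\ell+1)$-blocks through the prefix map and to show that the only way $\calC(S;\ell)$ could exceed $\calC(S;\ell+1)$ forces all $m$-blocks of $S$ to be distinct, contradicting $\calC(S;m)\le p$ together with $n\ge p+m$.

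First, consider the map $\pi\colon \calB(S;\ell+1)\to\calB(S;\ell)$ sending an $(\ell+1)$-block to its length-$\ell$ prefix. Every $\ell$-block $w^{(\ell)}_i$ with $i\le n-\ell$ is the prefix of $w^{(\ell+1)}_i$, so it lies in the image of $\pi$. Hence the only $\ell$-block that can be missed is the final one, $w^{(\ell)}_{n-\ell+1}$, and only if it occurs in $S$ solely at position $n-\ell+1$. This gives $\calC(S;\ell)\le\calC(S;\ell+1)+1$. Moreover, equality $\calC(S;\ell)=\calC(S;\ell+1)+1$ forces two things. First, $w^{(\ell)}_{n-\ell+1}$ occurs only at the end. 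Second, $\pi$ is a bijection onto the remaining $\ell$-blocks.

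Assume, for contradiction, that $\calC(S;\ell)>\calC(S;\ell+1)$, so we are in this equality case. In the language of the digraph $D_\ell$, every vertex other than the terminal vertex $t=w^{(\ell)}_{n-\ell+1}$ has out-degree exactly $1$. Indeed, arcs out of $x$ correspond to $(\ell+1)$-blocks with prefix $x$, and there is exactly one such block by injectivity of $\pi$. The vertex $t$ has out-degree $0$, since it never occurs before the last position. Thus the walk $\calW_\ell$ is deterministic: $w^{(\ell)}_{i+1}$ is determined by $w^{(\ell)}_i$.

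Now suppose some vertex repeats in $\calW_\ell$, say $w^{(\ell)}_i=w^{(\ell)}_j$ with $i<j$. By determinism, the walk from position $i$ onward is periodic with period $j-i$. Then $t$, which appears at the last position, would also appear at an earlier position, which is impossible. Hence all $n-\ell+1$ of the $\ell$-blocks $w^{(\ell)}_1,\dots,w^{(\ell)}_{n-\ell+1}$ are distinct.

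Since $\ell<m$, each $m$-block $(s_i,\dots,s_{i+m-1})$ has $w^{(\ell)}_i$ as its prefix, so the $m$-blocks at positions $1,\dots,n-m+1$ are pairwise distinct. This gives $\calC(S;m)=n-m+1\ge p+1$, contradicting $\calC(S;m)\le p$. Therefore $\calC(S;\ell)\le\calC(S;\ell+1)$ for every $\ell\in[m-1]$.

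The only delicate point is the determinism-and-no-repetition step. It must correctly handle the terminal vertex having no outgoing arc, and possible loops in $D_\ell$, which are covered by the case $j=i+1$.
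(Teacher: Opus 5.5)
Your proof is correct, but its key step is organized differently from the paper's.

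The paper argues directly. The walk $\calW_\ell$ traverses every vertex and arc of $D_\ell$, so the underlying graph is connected, and it therefore suffices to find one directed cycle. That cycle comes from pigeonhole on $\calW_\ell$, via the iterated bound $\calC(S;\ell)\le \calC(S;m)+(m-\ell)\le p+m-\ell<n-\ell+1$.

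You instead use the one-step bound $\calC(S;\ell)\le\calC(S;\ell+1)+1$ once and analyze its equality case. There the prefix map is a bijection onto $\calB(S;\ell)\setminus\{t\}$, so $D_\ell$ is a functional digraph whose unique sink is the terminal block. The walk is then deterministic, hence injective, and distinct prefixes force all $n-m+1$ of the $m$-blocks to be distinct, contradicting $\calC(S;m)\le p$.

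Both proofs rest on the same tension: the walk is too long to avoid repeating a vertex. You replace the graph fact that a connected graph containing a cycle has at least as many edges as vertices with an explicit out-degree count. That graph fact needs a little care when the cycle is a loop or a pair of antiparallel arcs, and your count sidesteps this.

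The paper's version is shorter. Yours is more self-contained and makes the rigid structure of the extremal case explicit. That structure is exactly the out-degree reasoning the paper uses later in the proof of Theorem~\ref{thm:finite-M-H}, once it has $\calC(S;\ell)=\calC(S;\ell+1)$.
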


We note that from its definition, it holds that $|V(D_{\ell})| = \calC(S; \ell)$ and $|E(D_{\ell})| = \calC(S; \ell+1)$. Hence, it suffices to show that $|V(D_{\ell})| \leq |E(D_{\ell})|$.

\begin{claimproof}[Proof of Claim~\ref{clm:monotonic}]
    Observe that the sequence $S$ gives a walk on $D_{\ell}$. For example, if $S=(1,2,3,4)$ and $\ell = 2$, then $S$ gives a directed walk $(1, 2) \rightarrow (2, 3) \rightarrow (3, 4)$ on $D_2$. Furthermore, this walk traverses every vertex and edge of $D_{\ell}$. Consequently, the underlying undirected graph of $D_{\ell}$ is connected. Therefore, to show $|E(D_\ell)| \geq |V(D_\ell)|$, it suffices to prove that $D_\ell$ contains at least one directed cycle.
    
    Observe that the trivial inequality $\calC(S; t)\leq \calC(S; t+1)+1$ holds for any integer $t$, since every $t$-block in $S$, except possibly the very last one, extends to at least one $(t+1)$-block. Applying this inequality repeatedly from $k= \ell$ to $m-1$, we obtain 
    $$\calC(S; \ell) \leq \calC(S;m) + (m-\ell).$$
    Together with the assumption $\calC(S; m) \leq p$, it holds that 
    \begin{equation}\label{eq:bound-VDell}
        |V(D_{\ell})| = \calC(S; \ell) \leq p + m - \ell  < n - \ell + 1.
    \end{equation}

    Recall that there is a directed walk $\calW_{\ell} = (w^{(\ell)}_1, \dots, w^{(\ell)}_{n-\ell + 1})$ on $D_{\ell}$. By \eqref{eq:bound-VDell} and the pigeonhole principle, there exist distinct indices $i, j$ such that $w^{(\ell)}_i = w^{(\ell)}_j$. This implies that $\calW_{\ell}$ induces at least one directed cycle between $w_i$ and $w_j$. This completes the proof.
\end{claimproof}

If $\calC(S; 1) = 1$, then $S$ consists of a single alphabet, and hence the conclusion follows immediately. Thus, we may assume that $2 \leq \calC(S; 1)$. Together with Claim~\ref{clm:monotonic}, we deduce 
$$
    2 \leq \calC(S; 1) \leq \calC(S; 2) \leq \cdots \leq \calC(S; m) \leq p.
$$
By the pigeonhole principle, there exists $\ell < m$ such that $\calC(S; \ell) = \calC(S; \ell + 1)$. Then we have 

\begin{equation}\label{eq:double-counting}
    |V(D_\ell)|=\calC(S;\ell)=\calC(S;\ell+1)=|E(D_l)| = \sum_{v\in V(D_{\ell})}d_{D_{\ell}}^+(v),
\end{equation}
where $d_{D_{\ell}}^+(v)$ denotes the out-degree of $v$ in $D_{\ell}$.
We note that every vertex of $D_{\ell}$, except possibly the last vertex of $\calW_{\ell}$, has out-degree of at least $1$. This observation and \eqref{eq:double-counting} imply that there are only two possible cases: either every vertex of $D_{\ell}$ has an out-degree of exactly $1$, or the last vertex of $\calW_{\ell}$ has out-degree $0$, and all the other vertices except one have out-degree $1$.
We divide the proof into these two cases and handle each separately.

\begin{case}\label{case:1}
    Every vertex of $D_{\ell}$ has out-degree exactly one.
\end{case}

By the pigeonhole principle and the fact that $|V(D_{\ell})| \leq p$, there exist indices $i$ and $j$ such that $1\leq i < j \leq p+1$ and $w^{(\ell)}_i = w^{(\ell)}_j$. This implies that $\calW_{\ell}$ induces a directed cycle $C$ between $w^{(\ell)}_i$ and $w^{(\ell)}_j$. Note that the length of the cycle $C$ is at most $p$ as $j - i \leq p$. Fix such $i$ and then choose $j$ to be the smallest among the indices $i < j \leq p+1$ such that $w^{(\ell)}_i = w^{(\ell)}_j$. Since the out-degree of all vertices in $D_{\ell}$ is exactly $1$, the directed cycle $C$ is the unique cycle in $D_{\ell}$. Thus, the walk $\calW_{\ell}$ must repeatedly proceed along $C$ until the sequence terminates once after it meets the vertex $w^{(\ell)}_i$. Hence, by removing the first $i-1$ alphabets of $S$, the remaining subsequence is periodic with period $j-i$, which is at most $p$. In other words, the $(i-1, 0)$-reduction of $S$ is periodic with period at most $p$ and also $i-1 \leq p$. This completes the proof in this case.

\begin{case}
    The last vertex of $\calW_{\ell}$ has out-degree $0$, and all the other vertices except one have out-degree $1$ in $D_{\ell}$.
\end{case}

Let $v$ be the last vertex of $\calW_{\ell}$ and $w$ be the unique vertex of out-degree larger than $1$. 
Let $I \defeq \{i : w^{(\ell)}_i = w\}$ and let $t$ be the maximum element of $I$. Observe that $d^+_{D_{\ell}}(v) = 0$ implies $v$ appears exactly once in $\calW_{\ell}$ as the last vertex of the walk.

Suppose there exist two indices $x < y$ such that $y > t$ and $w^{(\ell)}_{x} = w^{(\ell)}_{y}$. 
Then a sub-walk $\calW_{\ell}$ induces a directed cycle $C$ between $w^{(\ell)}_{x}$ and $w^{(\ell)}_{y}$.
Also, by our choice of $v$ and $t$, we have
\begin{equation}\label{eq:neq}
    v \neq w^{(\ell)}_{x} = w^{(\ell)}_{y} \neq w.
\end{equation}

Note that since $v$ has out-degree zero, the directed cycle $C$ does not contain $v$. If $x > t$, then every vertex in $\calW_{\ell}$ appearing after $w^{(\ell)}_{t}$ has out-degree $1$, except for $v$. Hence, once a directed walk reaches a vertex of $C$, it must continue along $C$ indefinitely. This contradicts the fact that $\calW_{\ell}$ eventually terminates at the vertex $v$. Thus, we may assume that $x < t$. Then the cycle $C$ contains a vertex $w$ and all the other vertices of $C$ have out-degree exactly $1$. 
Thus, for the walk $\calW_{\ell}$ to eventually terminated at $v$, it must exists an index $z$ such that $z > y > t$ and $w^{(\ell)}_z = w$ by \eqref{eq:neq}. This contradicts the fact that $t$ is the maximum element of $I$. Thus, there are no such indices $x$ and $y$.

Therefore, each corresponding vertex of $w^{(\ell)}_{t+1},\dots, w^{(\ell)}_{n - \ell +1}$ appears exactly once in $\calW_{\ell}$ and all of them are distinct. Since $|V(D_{\ell})|\leq p$, we have 
\begin{equation}\label{eq:bound-t}
    (n - \ell + 1) - (t - 1) \leq p.    
\end{equation}

We now consider the subwalk $\calW' = (w^{(\ell)}_1, w^{(\ell)}_2, \dots, w^{(\ell)}_{t})$ induced by the truncated sequence $S' = \{s_i\}_{i=1}^{t+\ell-1}$. Let $D'_{\ell}$ be the sub-digraph of $D_{\ell}$ induced by $\calW'$. Since the path from $w^{(\ell)}_{t}$ to $v$ has been removed, the second outgoing edge from $w$ is no longer used in $\calW'$. Consequently, every vertex in $D'_{\ell}$ has an out-degree of exactly $1$. Applying the argument from Case~\ref{case:1} to $S'$, there exist two non-negative integers $a', b' \leq p$ such that $(a', b')$-reduction of $S'$ is periodic with period at most $p$. As the sequence $S'$ is obtained from removing the last $n - t - \ell + 1$ alphabets, the $(a', b' + n - t - \ell + 1)$-reduction of $S$ is periodic with period at most $p$, and also $b' + n - t - \ell + 1 \leq 2p$ as provided by \eqref{eq:bound-t}. This completes the proof.
\end{proof}

The next lemma packages Theorem~\ref{thm:finite-M-H} in the form needed
for the stepping-up argument: small pattern complexity forces a periodic
structure, and hence a long monotone subsequence along one residue class.


\begin{corollary}\label{cor:monotone-subsequence}
    Let $p\ge1$, $r\ge 2p+1$, and $n\ge 2r+5p$. Let $\Delta=\{\delta_i\}_{i=1}^{n}$ be a sequence of nonnegative integers of length $n$ with the unique maximum property and $\Delta_{2p}=\{\eta_i\}_{i=1}^{n-4p}$ be the $(2p,2p)$-reduction of $\Delta$. If $\calP(\Delta;r)\le p$, there exists $d\in [p]$ such that the following hold.

    \begin{enumerate}[label=(P\arabic*), ref=(P\arabic*)]

        \item\label{it:P1} The $(r-p+1)$-pattern sequence $X=\{x_i\}_{i=1}^{n-3p-r}$ of $\Delta_{2p}$ has least period $d$,

        \item\label{it:P2} the $r$-pattern sequence $Y=\{y_i\}_{i=1}^{n-4p-r+1}$ of $\Delta_{2p}$ satisfies that for $i,j\in [n-4p-(r-1)]$, $y_i\ne y_j$ whenever $i\not\equiv j\pmod d$, and

        \item\label{it:P3} for each $a\in [d]$, the subsequence of $\Delta_{2p}$ given by
        $$
        (\eta_i:1\le i\le n-4p,i \equiv a \pmod d)
        $$
        is either strictly increasing, strictly decreasing, or constant. Furthermore, at least one such subsequence is not constant.
    \end{enumerate}
\end{corollary}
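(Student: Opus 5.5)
Let $Z=\{z_i\}_{i=1}^{n-r+1}$ be the $r$-pattern sequence of $\Delta$. The plan is to apply Theorem~\ref{thm:finite-M-H} to $Z$, viewed as a word over the alphabet of patterns, rather than to $\Delta$ itself. Since $\calP(\Delta;r)\le p$, the $1$-block complexity of $Z$ is at most $p$, so $\calC(Z;p)\le \calC(Z;1)+p-1$ is not directly useful. Instead I would use a slightly different word. Take the $(r-p+1)$-pattern sequence $X'$ of $\Delta$; each $p$-block of $X'$ is determined by a single $r$-pattern. Indeed, $p$ consecutive $(r-p+1)$-patterns starting at $i$ are the sub-patterns of the $r$-pattern at $i$. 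Hence $\calC(X';p)\le \calP(\Delta;r)\le p$. The length of $X'$ is $n-r+p\ge p+p$, so Theorem~\ref{thm:finite-M-H} with $m=p$ yields $a,b\le 2p$ such that the $(a,b)$-reduction of $X'$ is periodic with some period at most $p$. Enlarging the reduction to exactly $(2p,2p)$ (deleting more keeps periodicity) and shifting indices gives that $X$, the $(r-p+1)$-pattern sequence of $\Delta_{2p}$, is periodic with period at most $p$. Let $d$ be its least period; this gives \ref{it:P1}. The bookkeeping of lengths ($n\ge 2r+5p$) ensures the relevant windows stay inside the reduction.

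Next, for \ref{it:P2} and \ref{it:P3}, I would note that each $r$-pattern $y_i$ determines $x_i,\dots,x_{i+p-1}$. Periodicity of $X$ with period $d\le p$ therefore means $y_i$ determines the whole window of length $r$, and $y_i=y_j$ forces $x_{i+s}=x_{j+s}$ for $0\le s<p$. If $i\not\equiv j \pmod d$, I would use that $X$ has least period $d$ and is long, so the shift $j-i$ restricted to a $p$-window that covers at least a full period forces a smaller period. This is a Fine--Wilf type step: the periods $d$ and $(j-i)\bmod d$ combine to give period $\gcd$ on a segment of length $\ge d+(j-i \bmod d)$, and hence on all of $X$ by periodicity, contradicting minimality. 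For the comparisons between $\eta_i$ and $\eta_{i+d}$, note that both lie in one window of length $r\ge 2p+1>d+1$. Since $x_i=x_{i+1}$ when $d$ divides the shift, the comparison symbol between $\eta_i$ and $\eta_{i+d}$ is the same as that between $\eta_{i+d}$ and $\eta_{i+2d}$. So each residue class is strictly increasing, strictly decreasing, or constant.

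Finally, I need at least one residue class to be nonconstant, and here I would use the unique maximum property. If every class were constant, then $\Delta_{2p}$ would be periodic with period $d$. Its contiguous subsequence of length $d+1$ (which fits since the length is large) would then have first and last entries equal, both lying among the values of one period. The maximum of the window $\eta_1,\dots,\eta_{2d}$ would then be attained at least twice, contradicting the unique maximum property.

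I expect the main obstacle to be \ref{it:P2}, namely the Fine--Wilf argument transferring distinct residues to distinct $r$-patterns, together with the careful index bookkeeping between $\Delta$, $\Delta_{2p}$, $X$, and $Y$ and the length conditions.
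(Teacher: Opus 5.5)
Your proposal is correct and follows essentially the same route as the paper. Like the paper, you apply Theorem~\ref{thm:finite-M-H} with $m=p$ to the $(r-p+1)$-pattern sequence of $\Delta$, use that $y_i$ determines $x_i,\dots,x_{i+p-1}$ for \ref{it:P2}, propagate comparisons by $d$-periodicity for \ref{it:P3}, and invoke the unique maximum property for non-constancy. The differences are minor: in \ref{it:P2} the Fine--Wilf step is unnecessary, since the $p\ge d$ equalities $x_{i+s}=x_{i+h+s}$ with $h=(j-i)\bmod d$ already hit every residue class and give period $h<d$ on all of $X$ directly; and in \ref{it:P3} the relevant windows are those of $X$ (length $r-p+1\ge d+2$, not $r$), with comparisons near the right end of $\Delta_{2p}$ read off from an earlier window, which the paper does via the last versus $(r-p+1-d)$-th entry.
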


\begin{proof}[Proof of Corollary~\ref{cor:monotone-subsequence}]

Let $X'$ be the $(r-p+1)$-pattern sequence of $\Delta$.
    Note that each $p$-block $(x_i,x_{i+1},\dots,x_{i+p-1})$ of $X'$ is determined by the $r$-pattern of $(\delta_i,\delta_{i+1},\dots,\delta_{i+r-1})$. This implies that if $\calC(X';p)>p$, then $\calP(\Delta;r)>p$, a contradiction. Thus, we have $\calC(X';p) \leq p$. By Theorem~\ref{thm:finite-M-H}, the $(2p,2p)$-reduction $X''$ of $X'$ is periodic with least period $d \leq p$. Observe that $X''$ is the $(r-p+1)$-pattern sequence of $\Delta_{2p}$, i.e., $X''=X$. Therefore, \ref{it:P1} holds. 
    
    We now prove \ref{it:P2}. If $d=1$, then there is nothing to prove, so assume $d\ge2$. Suppose, for contradiction, that
    $i<j\in [n-4p-(r-1)]$, $i\not\equiv j\pmod d$, and $y_i=y_j$. Since $X$ is periodic with least period $d$, there are $(r-p+1)$-blocks $B_0,\ldots,B_{d-1}$ of $\Delta_{2p}$ such that $x_u=B_{u\bmod d}$ for all $u\in [n-3p-r]$. 
    
    We now note that $y_i$ is the $r$-pattern starting at $i$, it determines all the shorter patterns
    $x_i,x_{i+1},\ldots,x_{i+p-1}$. Similarly, $y_j$ determines
    $x_j,x_{j+1},\ldots,x_{j+p-1}$. Hence $y_i=y_j$ implies that
    \begin{equation}\label{eq:B_modd}
    B_{(i+t)\bmod d}=B_{(j+t)\bmod d}    
    \end{equation}
    for every $0\le t\le p-1$. Let $h=(j-i)\bmod d$. Then the equation $(\ref{eq:B_modd})$ is rewritten as
    $$
    B_{(i+t)\bmod d}=B_{(i+t+h)\bmod d}
    $$
    for every $0\le t\le p-1$. Since $d\le p$, the set $\{0,1,\dots,p-1\}$ contains representatives of all residue classes modulo $d$. Therefore, we have
    $$
    B_{u\bmod d}=B_{(u+h)\bmod d}
    $$
    for every $0\le u\le p-1$.
    
    It follows that $X$ is periodic with period $h$. Since $h$ is strictly smaller than $d$, this is a contradiction by the minimality of $d$. Hence \ref{it:P2} holds.

    To prove \ref{it:P3}, let $\Delta_{2p}^a$ denote the subsequence of $\Delta_{2p}$ consisting of all terms $\eta_i$ such that $1\le i\le n - 4p$ and $i \equiv a \pmod d$. It suffices to show that for any $a\in [d]$, $\Delta_{2p}^a$ is either constant or strictly monotone.
    
    Without loss of generality, assume that $\eta_a<\eta_{a+d}$. Then, in $x_a$, the first term is strictly smaller than the $(d+1)$-th term. Since $X$ is periodic with period $d$, in $x_{a+d}$, the first term is strictly smaller than the $(d+1)$-th term. This implies that $\eta_{a+d}<\eta_{a+2d}$. By repeating this process, we obtain the inequality
    $$
    \eta_{a}<\eta_{a+d}<\dots<\eta_{a+td}
    $$ for all $t$ such that $a+td+(r-p)\le n-4p$.
    
    Let $t_0$ be the maximum integer such that $a+t_0d+(r-p)\le n-4p$. By the maximality of $t_0$, we have $a+t_0d+(r-p)> n-4p-d$ which guarantees that
    $$
    a+t_0d> n-4p-d-(r-p)\ge (2r+5p)-r-5p\ge r.
    $$
    Therefore, there is a $j\in [n-4p]$ such that $j+(r-p)=a+t_0d$. Since $\eta_{a+(t_0-1)d}<\eta_{a+t_0d}$, in $x_j$, we know that the last term is strictly larger than the $(r-p+1-d)$-th term. Since $X$ is periodic with period $d$, for any $t>t_0$ such that $a+td\le n-4p$, in $x_{j+(t-t_0)d}$, the last term is strictly larger than the $(r-p+1-d)$-th term. This implies that $\eta_{a+(t-1)d}<\eta_{a+td}$. Hence, we have
    $$
        \eta_{a}<\eta_{a+d}<\dots<\eta_{a+td}
    $$
    for any $t$ such that $a+td\le n-4p$, i.e., $\Delta_{2p}^a$ is strictly monotone.
    
    Observe that if $\eta_a=\eta_{a+d}$, the subsequence $\Delta_{2p}^a$ becomes constant. Finally, if $\Delta_{2p}^a$ is constant for every $a\le d$, it is a contradiction because $\Delta_{2p}$ satisfies the unique maximum property. Therefore, there is at least one integer $a\le d$ such that $\Delta_{2p}^a$ is strictly monotone. 
\end{proof}

\section{Stepping-up lemmas}\label{sec:step-up}

This section contains the two stepping-up ingredients used in the proof
of the lower bound. The first result, Theorem~\ref{thm:stepup-low-uniformity},
is an endpoint stepping-up lemma: it starts from a graph construction and
produces the first exponential lower bound at uniformity $p+2$. The
second result, Theorem~\ref{thm:main-stepup}, is the main stepping-up
lemma. It applies from uniformity $2p+2$ onward and is the place where
the all-phases argument and the finite Morse--Hedlund theorem enter.

We prove Theorem~\ref{thm:stepup-low-uniformity} in
Subsection~\ref{subsec:first-endpoint} and
Theorem~\ref{thm:main-stepup} in Subsection~\ref{subsec:large-range}.
In Section~\ref{sec:lower-bound}, we combine these two stepping-up
lemmas with the graph lower bound and monotonicity in the uniformity to
prove the lower bound in Theorem~\ref{thm:main}(ii).

\begin{theorem}\label{thm:stepup-low-uniformity}
    Fix $p \geq 1$ and let $C = \binom{p+1}{2}$. Then for all integers $q > p C$, $Q\ge (p+1)q^C$ and $n \geq 2$, we have
    $$
        A_{p+2}(3pn;Q,p)>2^{A_2(n;q, pC)-1}.
    $$
\end{theorem}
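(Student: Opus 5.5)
Set $m = A_2(n;q,pC)-1$ and fix a $q$-coloring $\chi$ of $[m]^{(2)}$ with no monotone path of length $n$ whose edges use at most $pC$ colors. We will build a $Q$-coloring $\psi$ of $(\{0,1\}^m)^{(p+2)}$, with vertices ordered reverse-lexicographically, that has no monotone path of length $3pn$ using at most $p$ colors. Take a $(p+2)$-edge $V$ and let $\Delta(V)=(\delta_1,\dots,\delta_{p+1})$ be its $\delta$-sequence. Color $V$ by the pair
\[
\psi(V) = \bigl(\pi(V), (\chi(\delta_i,\delta_j))_{1\le i<j\le p+1}\bigr),
\]
where $\pi(V)$ records the pattern of $\Delta(V)$, coarsened to at most $p+1$ classes. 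When $\delta_i=\delta_j$ we substitute a dummy symbol for $\chi(\delta_i,\delta_j)$. The second coordinate is a $C$-tuple of $\chi$-colors, so $\psi$ uses at most $(p+1)q^C\le Q$ colors once the first coordinate is compressed into $p+1$ values. For the coarsening I would record only the position of the unique maximum of $\Delta(V)$. By Proposition~\ref{prop:UMP} this maximum is unique along any monotone path, so it takes one of $p+1$ values.

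Now suppose $v_1<\dots<v_{3pn+p+1}$ is a monotone path using at most $p$ colors of $\psi$. Its $\delta$-sequence $\Delta=(\delta_1,\dots,\delta_{3pn+p})$ has the unique maximum property. Each path edge $\{v_s,\dots,v_{s+p+1}\}$ sees the window $(\delta_s,\dots,\delta_{s+p})$. The main task is to extract from $\Delta$ a strictly monotone subsequence $\delta_{i_1}<\delta_{i_2}<\cdots<\delta_{i_{n+1}}$ (or a decreasing one) with two properties: consecutive indices lie within distance $p$, so that each pair $(\delta_{i_t},\delta_{i_{t+1}})$ sits inside a common window; and the positions of the pair within its window come from a bounded set of at most $C$ coordinates. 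Then every $\chi(\delta_{i_t},\delta_{i_{t+1}})$ is a coordinate of some path color. The path has at most $p$ colors, each contributing at most $C$ coordinates, so the set $\{\chi(\delta_{i_t},\delta_{i_{t+1}})\}$ has size at most $pC$. This gives a monotone path of length $n$ in $[m]$ using at most $pC$ colors of $\chi$, a contradiction. The decreasing case is the same after reversing.

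To find this subsequence I would use the unique maximum property and the locality of windows. Every window of length $p+1$ has a unique maximum, and the position of that maximum is part of $\psi$. The path uses at most $p$ colors, so the argmax position takes at most $p$ of its $p+1$ possible values along the path. Following the maxima of successive windows gives a sequence of indices $j_1\le j_2\le\cdots$ with gaps at most $p$; these are the running window-maxima. Where the maximum stays put we skip; where it moves, the new maximum is larger than the old one only if the old one left the window. So I would instead split $\Delta$ into $3n$ consecutive stretches of length $p$. On each stretch I would use pigeonhole together with the Erd\H{o}s--Szekeres-style fact that a unique-maximum sequence of length $L$ has a strictly monotone chain with consecutive gaps at most $p$ and length at least $L/(\text{const}\cdot p)$. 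One side of the global maximum has length at least half of $\Delta$. On that side, the left-to-right maxima, taken within consecutive windows, increase toward the global maximum, and each window contains a new record. This yields about $3pn/(2p)\ge n+1$ terms, which explains the factor $3p$.

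The main obstacle is making this monotone chain argument rigorous with gaps at most $p$: a window on the increasing side must always contain a strictly larger element than the previous record. I expect to prove it by applying the unique maximum property to the window that starts at the current record. If every element in the next $p$ positions were smaller than the record, then iterating the argument would contradict the fact that the global maximum lies further ahead. I also need to check that the two relevant positions inside each window are recovered consistently from $\psi$, so that the colors truly come from at most $p$ palettes of $C$ coordinates each.
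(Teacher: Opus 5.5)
Your coloring is essentially the paper's: the argmax position of $\Delta(V)$ together with the $\binom{p+1}{2}$ pair colors. Your final reduction is also correct: $n+1$ strictly increasing $\delta$-values, consecutive ones lying in a common path edge, give a $P_n^{(2)}$ with at most $pC$ colors. (Minor point: a separate dummy symbol gives $(q+1)^C$ tuples, which exceeds the budget. Use a fixed color of $[q]$ instead, as the paper does; those entries are never read off.)

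The gap is in how you extract the chain. Two claims are false for general sequences with the unique maximum property: your ``Erd\H{o}s--Szekeres-style fact'', and the claim that on one side of the global maximum each window contains a new record. Your suggested proof cannot work either, since it only applies the unique maximum property to the window at the current record and uses that the global maximum lies further ahead. The ruler sequence $\delta_i=\nu_2(i)+1$ is a counterexample; it is exactly the $\delta$-sequence of the binary expansions of $0<1<2<\cdots$.
\begin{itemize}
\item The record $\delta_{2^j}=j+1$ is followed by $2^j-1$ smaller entries, although larger values come later.
\item An entry of value $v$ is at distance at least $2^{v-1}$ from any larger entry. Hence every strictly monotone chain with gaps at most $p$ has at most $\log_2 p+2$ terms.
\end{itemize}

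What excludes this example is an observation you made and then did not use. The first coordinate of $\psi$ is the argmax position and the path uses at most $p$ colors, so some position $t\in[p+1]$ is never the argmax of any path edge. (In the ruler sequence every position occurs, since an isolated large value is the maximum of all $p+1$ windows containing it.) This gives the missing step directly. For every $i$ with $t\le i\le 3pn+t-1$, the path edge $V_{i-t+1}=\{v_{i-t+1},\dots,v_{i-t+p+2}\}$ has $\delta_i$ in position $t$. Its window therefore contains some $\delta_{i'}>\delta_i$ with $|i'-i|\le p$, and $\chi(\delta_i,\delta_{i'})$ is the $\{t,\,i'-i+t\}$ coordinate of that edge's color.

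The paper iterates this $n$ times, starting from an index at distance at least $pn$ from both ends of $[t,3pn+t-1]$. The jumps may go left or right, which is harmless because only the values must increase; this is where the factor $3p$ comes from. The same observation would also rescue your record idea away from the ends. If $\delta_x$ exceeds all earlier entries and its position-$t$ window is a path edge, the larger entry of that window must lie to the right of $x$, within distance $p$. Without invoking the missing argmax position, your argument does not produce the chain.
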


\begin{theorem}\label{thm:main-stepup}
    For an integer $p \geq 2$, there exists a constant $C_p$ depending only on $p$ such that the following holds for all $k \geq 2p+2$ and $s = \lceil (k-1)/p \rceil$. For all integers $q > 2p$, $Q \geq q^{C_p}$, and $n \geq 10k$, it holds that 
    $$
        A_k(3pn; Q, p) \geq 2^{M},
    $$
    where $M= \min\Bigl\{A_{k-1}(n; Q, p), A_{h}(n, q, t): t\in [p-1],(s-2)t+2\le h\le k-1 \Bigr\}-1$.
\end{theorem}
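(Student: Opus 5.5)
The plan is to run a color-avoiding version of the Erd\H{o}s--Hajnal construction on the vertex set $\{0,1\}^M$, where $M$ is as in the statement. By the definition of $M$ we may fix the following lower-uniformity colorings.
\begin{itemize}
\item A $Q$-coloring $\chi_0$ of $[M]^{(k-1)}$ with no monotone path of length $n$ using at most $p$ colors.
\item For every $t\in[p-1]$ and $(s-2)t+2\le h\le k-1$, a $q$-coloring $\chi_{h,t}$ of $[M]^{(h)}$ with no monotone path of length $n$ using at most $t$ colors.
\end{itemize}
For a $k$-set $V\subseteq\{0,1\}^M$ let $\Delta(V)=(\delta_1,\dots,\delta_{k-1})$. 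We color $V$ according to whether $\Delta(V)$ is monotone.
\begin{itemize}
\item If $\Delta(V)$ is strictly increasing or strictly decreasing, $V$ receives the color $(\uparrow\text{ or }\downarrow,\ \chi_0(\delta_1,\dots,\delta_{k-1}))$, exactly as in the classical stepping-up lemma.
\item Otherwise $V$ receives a tuple consisting of the $r$-pattern of $(\delta_1,\dots,\delta_r)$, for a fixed $r=2p+1$, together with, for every $d\in[p]$, every phase $a\in[d]$ and every $t\in[p-1]$, the value $\chi_{h,t}(\delta_a,\delta_{a+d},\delta_{a+2d},\dots)$. Here $h=h(d,a)$ is the number of indices $\le k-1$ that are congruent to $a$ modulo $d$; when these $\delta$'s are not distinct we record a dummy symbol.
\end{itemize}
Since $k-1\ge 2p+1$, each $h(d,a)\ge\lfloor (k-1)/p\rfloor\ge (s-2)t+2$, so every entry is defined. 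The number of entries is $O(p^3)$, and the number of $r$-patterns depends only on $p$. Hence the palette has size at most $q^{C_p}\le Q$.

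Suppose, for contradiction, that $v_1<\dots<v_{3pn+k-1}$ is a monotone path of length $3pn$ using at most $p$ colors. Its $\delta$-sequence $\Delta$ has the unique maximum property by Proposition~\ref{prop:UMP}, and the $j$-th edge sees the window $(\delta_j,\dots,\delta_{j+k-2})$. Because of the unique maximum property, $\Delta$ increases up to its maximum and decreases afterward only in a weak sense. The set of edges with a monotone window therefore consists of at most two runs, one of increasing and one of decreasing windows, each lying on one side of the position of the global maximum.

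\emph{Monotone case.} Suppose a run of at least $n$ consecutive edges has, say, increasing windows. Then consecutive windows overlap in $k-2$ entries and are increasing. They therefore form a monotone path of length $n$ in $[M]^{(k-1)}$, whose $\chi_0$-colors are among the at most $p$ second coordinates used. This contradicts the choice of $\chi_0$.

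\emph{Non-monotone case.} Otherwise the monotone runs are short, and removing them leaves a stretch of at least about $pn+2r+5p$ consecutive edges whose windows are all non-monotone. On this stretch the $r$-patterns of the corresponding subsequence $\Delta'$ of $\Delta$ take at most $p$ values, because they are read off from the colors; thus $\calP(\Delta';r)\le p$. Applying Corollary~\ref{cor:monotone-subsequence} gives a period $d\le p$ and a phase class along which the $\eta_i$ are strictly monotone. Two observations rule out the cases we cannot handle.
\begin{itemize}
\item $d\ne 1$: if $d=1$, the whole reduced sequence would be strictly monotone by (P3), so the windows would be monotone.
\item By (P2), edges starting in different residue classes mod $d$ have distinct $r$-patterns and hence distinct colors. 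So the edges starting in one fixed class use at most $p-(d-1)\le p-1=:t$ colors.
\end{itemize}
Now take the edges whose starting index lies in that class, spaced $d$ apart. Reading, in each such edge, the phase $a'$ (relative to the edge's start) that hits the monotone class, consecutive windows of this subsequence are consecutive $h$-windows of a strictly monotone sequence. They therefore form a monotone path in $[M]^{(h)}$, of length at least about $(pn)/d\ge n$. Its $\chi_{h,t}$-values, a fixed coordinate of the color tuple, take at most $t$ values. This contradicts the choice of $\chi_{h,t}$.

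I expect the main obstacle to be this last bookkeeping step. The phase $a'$ relative to the edge start must be the same for all selected edges; this follows since their starts are congruent mod $d$, and it is why all phases are recorded. The class length $h(d,a')$ must satisfy $h\ge(s-2)t+2$. Finally, the lengths must be checked: the $(2p,2p)$-reductions, the at most two monotone runs, and the slack from $n\ge 10k$ together must still leave a path of length $n$ inside the factor $3pn$.
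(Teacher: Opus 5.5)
Your coloring and the reduction to Corollary~\ref{cor:monotone-subsequence} follow the paper's route. However, the step that closes the non-monotone case fails as written.

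\textbf{The admissibility step.} The inequality $h(d,a)\ge\lfloor (k-1)/p\rfloor\ge (s-2)t+2$ is false: since $s\approx (k-1)/p$, the right-hand side grows like $t(k-1)/p$. Take $p=3$ and $k=11$, so $s=4$.
\begin{itemize}
\item Already $\lfloor 10/3\rfloor=3<4=(s-2)\cdot 1+2$.
\item If $d=2$, each residue class meets a window in $h=5$ positions. Your bound only says one class uses at most $p-d+1=2$ colors, and $(5,2)$ is not admissible, since $(s-2)\cdot 2+2=6$.
\end{itemize}
So the statement does not supply the coloring $\chi_{h,t}$ you want to contradict. The corresponding entries of your color tuple need a dummy symbol, and bounding a single class by the crude budget $p-d+1$ cannot close the argument.

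The missing idea is to distribute the color budget over all $d$ classes. Let $c_a$ be the number of colors on edges whose start lies in class $a$. Let $h_a$ be the number of positions of such a window lying in the strictly monotone class. By (P2) the color sets are disjoint, so $\sum_a c_a\le p$ and $1\le c_a\le p-1$; also $\sum_a h_a=k-1$. If $h_a\le (s-2)c_a+1$ for every $a$, summing gives $k-1\le (s-2)p+d\le (s-1)p$, contradicting $s=\lceil (k-1)/p\rceil$. Hence some class has $(h_a,c_a)$ admissible, and your phase-aligned argument with $\chi_{h_a,c_a}$ then gives the contradiction. This averaging is exactly what the threshold $(s-2)t+2$ in the statement encodes. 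The paper does it in Claim~\ref{clm:color-count}, choosing $t_a\le\lfloor (h_a-2)/(s-2)\rfloor$ with $\sum_a t_a=p-d+1$.

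\textbf{The monotone runs.} Your claim that the edges with monotone windows form at most two runs is also wrong, because the unique maximum property does not make $\Delta$ unimodal. For example, $2,3,4,5,1,2,3,4,6$ has the unique maximum property and is the $\delta$-sequence of an increasing sequence of binary vectors. Yet it has two separate increasing runs before its maximum. In general, short monotone runs could be scattered along the whole path, leaving no long non-monotone stretch.

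What you need is the boundary argument of Claim~\ref{clm:no-monotone}. Suppose some window is strictly increasing (the decreasing case is symmetric), and let $j$ be the first index where the increase breaks; by your monotone case this happens within $n+k-2$ steps. Then the windows starting at $j-\ell$, for $2\le\ell\le p+2$, are non-monotone with first local maximum at position $\ell$. Since $\ell+1\le r$, these windows have $p+1$ distinct $r$-patterns and hence $p+1$ distinct colors. So no monotone window occurs except near the end of the path, which gives the long non-monotone stretch you need.

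A minor point: tagging monotone colors with $\uparrow/\downarrow$ on top of the $Q$ colors of $\chi_0$, plus the non-monotone colors, exceeds $Q$ colors. Identify the non-monotone colors with elements of $\chi_0$'s palette instead; the argument never uses their separation.
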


\begin{remark}\label{rem:p=1-main-stepup}
The monochromatic case $p=1$ is also covered by the same stepping-up
argument, in the following stronger form. For every $q\ge 2$, every
$k\ge 4$, and every $n\ge 10k$,
\[
    A_k(3n;q,1)\ge 2^{A_{k-1}(n;q,1)-1}.
\]
Indeed, when $p=1$ the family indexed by $t\in[p-1]$ is empty, so the
quantity $M$ in Theorem~\ref{thm:main-stepup} becomes simply
$A_{k-1}(n;q,1)-1$. In the proof there are no admissible pairs, no
auxiliary colorings $\phi_{h,t}$, and no matrix $M(V)$. Thus the
all-phases part of the argument does not appear. The construction reduces
to the usual monochromatic stepping-up coloring: monotone
$\delta$-sequences are colored by the lower-uniformity coloring, while
non-monotone $\delta$-sequences are handled by the standard first-local-
extremum rule, using two fixed colors from the $q$-palette. Hence no
enlarged palette $Q$ is needed.
We state Theorem~\ref{thm:main-stepup} only for $p\ge2$ because this is precisely
the range in which the all-phases mechanism and the auxiliary colorings indexed by $t\in [p-1]$ are needed.
\end{remark}

\subsection{Proof of the endpoint stepping-up lemma}\label{subsec:first-endpoint}

\begin{proof}[Proof of Theorem~\ref{thm:stepup-low-uniformity}]
Let $M=A_2(n;q,pC)-1$ and $U=\{0,1\}^M$. To prove the theorem, it suffices to construct a $(p+1)q^C$-coloring of $U^{(p+2)}$ which does not contain $P_{3pn}^{(p+2)}$ using at most $p$ colors. By the definition of $A_2(n;q,pC)$, there is a $q$-coloring $\phi$ of $[M]^{(2)}$ with no copy of $P_n^{(2)}$ using at most $pC$ colors. Let $[q]$ be the palette of $\phi$. 

For any $V\in U^{(p+2)}$, since $\Delta(V)=(\delta_1,\delta_2,\dots,\delta_{p+1})$ has the unique maximum property, there is a unique index $i(V)\in [p+1]$ such that $\delta_{i(V)}$ is the maximum of $\Delta(V)$. To construct our desired coloring, we now define a $(p+1)\times(p+1)$ matrix $M(V)=(m_{ij})$ in the following way.

\begin{enumerate}[label=\textup{(\roman*)}]
    \item For $i, j\in[p+1]$ such that either $i=j$ or $\delta_i=\delta_j$, the $(i,j)$ entry $m_{ij}$ of $M(V)$ is $1$,

    \item for $i\ne j\in[p+1]$ with $\delta_i\ne \delta_j$, the $(i,j)$ entry $m_{ij}$ of $M(V)$ is $ \phi(\delta_i,\delta_j)$.
\end{enumerate}

Observe that $M(V)$ is a symmetric matrix whose diagonal entries are $1$. Since $\phi$ is a $q$-coloring, the number of possible matrices of the form $M(V)$ is at most $q^C$.

We now define a coloring $\chi$ of $U^{(p+2)}$ as $\chi(V)=(i(V),M(V))$. Then, $\chi$ is a $(p+1)q^C$-coloring of $U^{(p+2)}$.

Suppose, for contradiction, that $\chi$ contains a copy of $P_{3pn}^{(p+2)}$ using at most $p$ colors. Let $v_1<v_2<\dots<v_{3pn+p+1}$ be the vertices of such a path and $S=(v_1,v_2,\dots,v_{3pn+p+1})$. Also, let $\Delta=\Delta(S)=\{\delta_i\}_{i=1}^{3pn+p}$ and $V_i=\{v_i,v_{i+1},\dots,v_{i+p+1}\}$. Then we have $\Delta(V_i)=(\delta_i,\delta_{i+1},\dots,\delta_{i+p})$.

Since the $(p+2)$-uniform  monotone path on $S$ uses at most $p$ colors, by the definition of $\chi$, there is $t\in[p+1]$ such that $i(V)\ne t$ for every $V\in E_S$ where $E_S=\{V_i:i\in [3pn]\}$. Indeed, if there are $V_{j_1},V_{j_2},\dots,V_{j_{p+1}}$ such that $i(V_{j_{t}})=t$ for all $t\in [p+1]$, then the first coordinates of $\chi(V_{j_1}),\chi(V_{j_2}),\dots,\chi(V_{j_{p+1}})$ are pairwise different, i.e., $|\chi(E_S)|>p$. This is a contradiction because the $(p+2)$-uniform monotone path on $S$ uses at most $p$ colors. Hence there exists $t\in[p+1]$ such that $i(V)\ne t$ for every $V\in E_S$.

Let $I=[t,3pn-1+t]$. Thus $i\in I$ implies that there is $j\in [3pn]$ such that $i=j+t-1$.

We now note that the interval $I$ has size $3pn$, it
contains an index $i_0$ whose distance from both ends of $I$ is at least $pn$. We will construct a sequence of indices $i_0,i_1,\ldots,i_n$, maintaining the property that $i_s$ has distance at least $p(n-s)$ from both ends of $I$. This property holds for $s=0$ by the choice of $i_0$. Suppose $i_s$ is chosen for some $s<n$. Since $i_s\in I$, there is $j_s\in [3pn]$ such that $i_s=j_s+t-1$, i.e., $\delta_{i_s}$ is the $t$-th term of $\Delta(V_{j_s})$, hence it is not a maximum in $\Delta(V_{j_s})$. Therefore, there exists a unique index $j_s\le i_{s+1}\le j_s+p$ such that $\delta_{i_{s+1}}$ is the maximum of $\Delta(V_{j_s})$. Then we have $\delta_{i_{s+1}}>\delta_{i_s}$ and $|i_{s+1}-i_s|\le p$. Hence $i_{s+1}$ has distance at least $p(n-s)-p=p(n-s-1)$ from both ends of $I$, so the property holds. Thus the construction continues for $n$ steps.

Consequently, the inequality $\delta_{i_0}<\delta_{i_1}<\cdots<\delta_{i_{n}}\in [M]$ holds and consecutive indices differ by at most $p$. These $n+1$ increasing $\delta$-values form a copy of $P_n^{(2)}$ on $[M]^{(2)}$.

It remains to bound the number of colors used by this path under $\phi$. 
For each $s\in[n-1]\cup \{0\}$, the $t$-th term of $\Delta(V_{j_s})$ is $\delta_{i_s}$. Let $u_s=i_{s+1}-j_s+1$ be the position of $\delta_{i_{s+1}}$ in $\Delta(V_{j_s})$. Then the $(t,u_s)$ entry of $M(V_{j_s})$ is $\phi(\delta_{i_s},\delta_{i_{s+1}})$. Since $\chi(V_{i_0}),\dots, \chi(V_{i_n})$ uses at most $p$ colors, there are at most $p$ pairwise different matrices among $M(V_{j_0}),\dots, M(V_{j_{n-1}})$. Therefore, the number of element of $[q]$ that appears as entries of $M(V_{j_s})$ for some $s$ is at most $p\binom{p+1}{2}=pC$. In other words,
$$
|\{\phi(\delta_{i_s},\delta_{i_{s+1}}):s\in \{0,1,\dotsm,n-1\}\}|\le pC.
$$
This is a contradiction because $\phi$ does not induce a copy of $P_n^{(2)}$ using at most $pC$ colors.
\end{proof}


\subsection{Proof of the main stepping-up lemma}
\label{subsec:large-range}

\begin{proof}[Proof of Theorem~\ref{thm:main-stepup}]
Let $r=2p+1$, $Q\ge q^{(p^3+r)}=q^{(p^3+2p+1)}$, and $U=\{0,1\}^M$. Since $k\ge 2p+2$, we have $r\le k-1$. By the definition of $M$, there is a $Q$-coloring $\Phi$ of $[M]^{(k-1)}$ with no copy of $P_n^{(k-1)}$ using at most $p$ colors. Define a pair $(h,t)$ of positive integers to be \emph{admissible}, if $2\le h\le k-1$, $1\le t\le p-1$, and $(s-2)t+2\le h$. Then, by the definition of $M$, for every admissible pair $(h,t)$ there exists a $q$-coloring $\phi_{h,t}$ of $[M]^{(h)}$ with no $P_n^{(h)}$ using at most $t$ colors.

For every $\alpha\in\{1,\ldots,p\}$, $\beta\in\{1,\ldots,\alpha\}$, and $V\in U^{(k)}$ with $\Delta(V)=(\delta_1,\dots,\delta_{k-1})$, let $\Delta_{\alpha,\beta}(V)=(\delta_\beta,\delta_{\beta+\alpha},\dots,\delta_{\beta+(h_{\alpha,\beta}-1)\alpha})$ where $h_{\alpha,\beta}$ is the maximum integer such that $\beta+(h_{\alpha,\beta}-1)\alpha\le k-1$. For any $t\le p-1$, we say $V$ is \emph{$(\alpha,\beta,t)$-admissible} if all terms of $\Delta_{\alpha,\beta}(V)$ are pairwise different and $(h_{\alpha,\beta},t)$ is admissible. Observe that $h_{\alpha,\beta}$ does not depend on $V$. We now define a $3$-dimensional $(p\times p\times p)$ matrix $M(V)=(m_{ij\ell})_{i,j,\ell\le p}$ whose entries all lie in $[q]$ in the following way.

\begin{enumerate}[label=\textup{(\roman*)}]
    \item For $j\le i\le p$ and $\ell\le p-1$, if $V$ is $(i,j,\ell)$-admissible, then $m_{ij\ell}\defeq\phi_{h_{i,j},\ell}(\Delta_{i,j}(V))$, 
    
    \item If either $i< j\le p$ or $V$ is not $(i,j,\ell)$-admissible, then $m_{ij\ell}\defeq1$.
\end{enumerate}

Again, let $V\in U^{(k)}$ with $\Delta(V)=(\delta_1,\dots,\delta_{k-1})$. Define $P(V)$ as the pattern of $\{\delta_1,\delta_2,\dots,\delta_{r}\}$. Since the number of $r$-patterns is at most $r^r$, we consider $P(V)$ as an element of $[r^r]$. 

To prove the theorem, it suffices to construct a $Q$-coloring $\chi$ of $U^{(k)}$ which does not contain $P_{3pn}^{(k)}$ using at most $p$ colors. First, we assume that the palette of $\Phi$ contains $[r^r]\times [q]^{p\times p \times p}$ where $[q]^{p\times p \times p}$ is the set of all $(p\times p \times p)$ matrices whose entries are in $[q]$. Since $Q\ge q^{(p^3+r)}\ge r^r\times q^{(p^3)}$, the assumption is valid. We now define $\chi$ in the following way.

    \begin{enumerate}[label=(R\arabic*), ref=(R\arabic*)]
        \item \label{it:R1} If $\Delta(V)$ is monotone, then $\chi(V) \defeq \Phi(\Delta(V))$. Otherwise,
    
        \item \label{it:R2} $\chi(V)\defeq (P(V),M(V))$.
    \end{enumerate}

To complete the proof, we must show that $\chi$ does not contain  $P_{3pn}^{(k)}$ using at most $p$ colors. Suppose, for contradiction, that $\chi$ contains $P_{3pn}^{(k)}$ using at most $p$ colors, i.e., there is a strictly increasing sequence $S=(v_1,v_2,\dots v_{3pn+k-1})$ of $U$ which induces a $k$-uniform  monotone path with at most $p$ colors under $\chi$.

Let $\Delta=\Delta(S)=(\delta_1,\delta_2,\dots,\delta_{3pn+k-2})$ and $V_i=\{v_i,v_{i+1},\dots,v_{i+k-1}\}$ for all $i\in [3pn]$. We note that $\Delta(V_i)=\{\delta_i,\delta_{i+1},\dots,\delta_{i+k-2}\}$.

\begin{claim}\label{clm:no-monotone-interval}
The sequence $\Delta$ is not strictly monotone on any interval of length $n+k-2$.    
\end{claim}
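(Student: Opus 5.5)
Suppose, for contradiction, that $\Delta$ is strictly monotone on some interval $[a, a+n+k-3]$, that is, on the $n+k-2$ consecutive terms $\delta_a,\dots,\delta_{a+n+k-3}$. We need this interval to sit inside the index range of $\Delta$, namely $a\ge 1$ and $a+n+k-3\le 3pn+k-2$. The plan is to use this interval to build a copy of $P_n^{(k-1)}$ in $[M]^{(k-1)}$ that uses at most $p$ colors under $\Phi$. That contradicts the choice of $\Phi$.

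\emph{Step 1: the edges have monotone $\delta$-sequences.} For each $j\in\{a,\dots,a+n-1\}$ we have $j+k-2\le a+n+k-3$. Hence the window $\Delta(V_j)=(\delta_j,\dots,\delta_{j+k-2})$ lies inside the monotone interval, so it is strictly monotone. Note also that $j\le a+n-1\le 3pn$, so each $V_j$ is an edge of the path.

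\emph{Step 2: read off the colors.} By rule (R1), $\chi(V_j)=\Phi(\Delta(V_j))$ for every such $j$. Here $\Delta(V_j)$ is regarded as the set $\{\delta_j,\dots,\delta_{j+k-2}\}\in[M]^{(k-1)}$, which has $k-1$ distinct elements because the window is strictly monotone.

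\emph{Step 3: the sets form a monotone path in $[M]$.} The values $\delta_a,\dots,\delta_{a+n+k-3}$ are distinct. They are either increasing or decreasing; in the decreasing case we read them in reverse. Listed in increasing order, they are $n+k-2$ vertices of $[M]$. The sets $\Delta(V_a),\dots,\Delta(V_{a+n-1})$ are exactly the $n$ consecutive $(k-1)$-subsets of this list, so they form a copy of $P_n^{(k-1)}$. Reversing the order does not matter, since the family of consecutive windows is the same either way.

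\emph{Step 4: count colors.} The colors $\Phi(\Delta(V_j))=\chi(V_j)$ all lie among the at most $p$ colors used by the $k$-uniform path on $S$. So this is a copy of $P_n^{(k-1)}$ with at most $p$ colors under $\Phi$, which is the desired contradiction.

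The only point needing care is the bookkeeping in Steps 1 and 2. One has to confirm that the interval length $n+k-2$ gives exactly $n$ full windows of length $k-1$. One also has to read (R1) so that a monotone $\delta$-sequence is treated as a $(k-1)$-subset of $[M]$ in the correct orientation. I do not expect a genuine obstacle beyond this.
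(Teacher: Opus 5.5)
Your proof is correct and follows the paper's argument: the $n$ windows $\Delta(V_a),\dots,\Delta(V_{a+n-1})$ lie inside the monotone interval, so (R1) gives them the colors $\Phi(\Delta(V_j))$, and together they form a copy of $P_n^{(k-1)}$ in $[M]$ using at most $p$ colors under $\Phi$. Your extra bookkeeping fills in details the paper leaves implicit, namely checking that $a+n-1\le 3pn$ so each $V_j$ is an edge of the path, and handling the decreasing case by reversal.
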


\begin{claimproof}[Proof of Claim~\ref{clm:no-monotone-interval}]
By the definition of $\chi$, if there is such an interval $J=\{a,a+1,\dots,a+(n+k-3)\}\subseteq [3pn+k-1]$, then $\Delta(V_i)$ is strictly monotone for all $i\in \{a,a+1,\dots,a+(n-1)\}$. By \ref{it:R1}, we have
$$
    \chi(V_i)=\Phi(\Delta(V_i))
$$
for all $i\in \{a,a+1,\dots,a+(n-1)\}$.

Therefore, the $(k-1)$-uniform  monotone path induced by a strictly increasing sequence $(\delta_a,\delta_{a+1},\dots,\delta_{a+(n+k-3)})$ uses at most $p$ colors under $\Phi$ because the edges of the path are precisely $\Delta(V_a),\dots,\Delta(V_{a+n-1})$. This is a contradiction.    
\end{claimproof}

\begin{claim}\label{clm:no-monotone}
    For all $i\in [(3p-1)n-k]$, the sequence $\Delta(V_i)$ is non-monotone.
\end{claim}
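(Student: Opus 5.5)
The plan is to argue by contradiction, combining Claim~\ref{clm:no-monotone-interval} with a counting argument on the pattern coordinate $P(V)$ in rule~\ref{it:R2}. Since $\chi$ uses at most $p$ colors on the path, it suffices to find more than $p$ non-monotone windows $V_j$ whose patterns $P(V_j)$ are pairwise distinct. Such windows receive pairwise distinct colors, since $P(V_j)$ is the first coordinate of $\chi(V_j)$ under \ref{it:R2}.

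First I record a basic fact. Consecutive entries of $\Delta$ are distinct: $S$ is monotone, so by Proposition~\ref{prop:UMP} the block $(\delta_j,\delta_{j+1})$ has a unique maximum. Hence "$\Delta(V_i)$ is monotone" means that $\delta_i,\dots,\delta_{i+k-2}$ is strictly monotone.

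Now suppose some $i\in[(3p-1)n-k]$ has $\Delta(V_i)$ monotone, say strictly increasing (the decreasing case is symmetric).
\begin{itemize}
\item Let $J=[i,e]$ be the longest interval starting at $i$ on which $\Delta$ is strictly increasing. By Claim~\ref{clm:no-monotone-interval}, $J$ has length less than $n+k-2$, so $e\le i+n+k-4$.
\item The range restriction on $i$ ensures $e+1\le 3pn+k-2$. So $\delta_{e+1}$ exists, and by maximality of $J$ together with the distinctness of consecutive entries, $\delta_{e+1}<\delta_e$.
\item We also have $e\ge i+k-2$.
\end{itemize}

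Next I consider the windows $V_j$ for $j=e-r+2,\dots,e-1$, where $r=2p+1$; there are $r-2=2p-1$ of them. These are legitimate path edges:
\begin{itemize}
\item $j\ge e-r+2\ge i+k-r\ge i\ge 1$, using $k\ge r+1$.
\item $j\le e-1<3pn$, by the bound on $e$ and the hypothesis on $i$.
\end{itemize}
For each such $j$, the first $r$ entries $\delta_j,\dots,\delta_{j+r-1}$ of $\Delta(V_j)$ contain index $e$ at relative position $e-j+1\in\{2,\dots,r-1\}$. The entries $\delta_j,\dots,\delta_e$ all lie in $J$, because $j\ge i$, so they increase strictly, and then $\delta_{e+1}<\delta_e$. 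Thus $\Delta(V_j)$ is non-monotone, and $\chi(V_j)=(P(V_j),M(V_j))$ by \ref{it:R2}.

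The pattern $P(V_j)$ records the position of the first descent among the first $r$ entries, namely $e-j+1$. This position is different for different $j$, so the $2p-1$ windows have pairwise distinct patterns and hence pairwise distinct colors. Since $2p-1>p$ for $p\ge2$, this contradicts the assumption that the path uses at most $p$ colors.

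The main obstacle is bookkeeping rather than ideas. One must check that the chosen windows lie inside the path's index range $[3pn]$, which is exactly what the restriction $i\le (3p-1)n-k$ buys. One must also check that the initial increasing run inside each window really lies in $J$, which uses $k-1\ge r$, i.e.\ $k\ge 2p+2$.
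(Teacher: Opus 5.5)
Your proof is correct and follows essentially the same route as the paper. You take the first index $e+1$ (the paper's $j$) where the increasing run starting at $i$ breaks, bounded via Claim~\ref{clm:no-monotone-interval}, and show that the windows starting just before $e$ are non-monotone, with their first descent at pairwise distinct positions inside the first $r$ entries; their patterns $P(V)$ then force more than $p$ colors under \ref{it:R2}. The only cosmetic difference is that you use the $2p-1$ windows with descent positions $2,\dots,2p$, whereas the paper uses exactly the $p+1$ windows with positions $2,\dots,p+2$; both versions rely on $p\ge 2$.
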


\begin{claimproof}[Proof of Claim~\ref{clm:no-monotone}]
    Suppose there is an index $i\in [(3p-1)n-k]$ such that $\Delta(V_i)$ is monotone. Without loss of generality, we assume that $\Delta(V_i)$ is strictly increasing. 
    Let $j \geq i+k-1$ be the smallest integer such that $(\delta_{i},\delta_{i+1},\ldots,\delta_{j})$ is non-monotone. Since $\Delta$ is not monotone in any interval of length $n+k-2$ by Claim~\ref{clm:no-monotone-interval}, such a $j$ exist and 
    $$
    i+(k-1) \le j \leq 3pn-1.
    $$
    By the choice of $j$, the sequence $\Delta(V_{j  - \ell})$ is non-monotone for every $2\le\ell\le p+2$. Indeed, since $k\ge 2p+2\ge p+4$, we have
    $$
    j-\ell<j-1<j\le j-\ell+(k-2).
    $$
    Therefore, both $\delta_{j-\ell}$ and $\delta_{j}$ are captured by $\Delta(V_{j-\ell})$ for all $2\le\ell\le p+2$. Since $\delta_{j-\ell}<\delta_{j-\ell+1}$ and $\delta_{j-1}>\delta_{j}$, we deduce that $\Delta(V_{j-\ell})$ is non-monotone for all $2\le\ell\le p+2$.
    
    Furthermore, the $r$-patterns of $\Delta(V_{j-2}),\Delta(V_{j-3}),\dots,\Delta(V_{j-(p+2)})$ are pairwise different because, by the minimality of $j$, the sequence $(\delta_i,\delta_{i+1},\dots,\delta_{j-1})$ is strictly increasing, while $\delta_{j-1}>\delta_{j}$. Hence, in $\Delta({V_{j-\ell}})$, the first local maximum occurs at the position $\ell$, and these positions are pairwise distinct. Thus, the $k$-uniform monotone path induced by $S$ spans at least $(p+1)$ colors under $\chi$. This is a contradiction.
    \end{claimproof}

Let $\Delta'=(\delta_1,\delta_2,\dots,\delta_{n'})$ where $n'=(3p-1)n-k+(r-1)$. Since $p\ge2$, $n\ge 10k$, $k\ge2p+2$, and $r=2p+1$, we have $n'\ge 2r+5p$. Furthermore, $\Delta'$ has the unique maximum property and satisfies $\calP(\Delta';r)\le p$ because $\Delta$ does. Therefore, by Corollary~\ref{cor:monotone-subsequence}, the $(2p,2p)$-reduction $\Delta_{2p}'$ satisfies the properties in Corollary~\ref{cor:monotone-subsequence}. Then, the $(p+2)$-pattern sequence of $\Delta_{2p}'$ is periodic with least period $d$ for some $d\le p$. Furthermore, the least period $d$ should be strictly larger than $1$, because if $d=1$, by \ref{it:P3}, the reduction $\Delta'_{2p}$ is strictly increasing. This contradicts to Claim~\ref{clm:no-monotone}. 

For convenience, denote $\Delta_{2p}'$ as $\Delta^*=(\eta_1,\eta_2,\dots,\eta_{n'-4p})$ and for $a\in [d]$, denote $\Delta_{a}^*$ for the subsequence of $\Delta^*$ consisting of all terms $\eta_i$ such that $1\le i\le n'-4p$ and $i \equiv a \pmod d$.
Observe that for any $i\in [n'-4p]$, we have $\eta_i=\delta_{i+2p}$ in $\Delta$. By \ref{it:P3} of Corollary~\ref{cor:monotone-subsequence}, there is a $\sigma\in[d]$ such that $\Delta_{\sigma}^*$ is strictly monotone. Without loss of generality, we assume that $\Delta_{\sigma}^*$ is strictly increasing. If the subsequence is strictly decreasing, we apply the same argument to the reversed sequence.

For each $a\in [d]$ and $j$ such that $a+d(j-1)+(k-2)\le n'-4p$, let $h_a$ be the number of indices $j'$ such that $\sigma+d(j'-1)\in[a+d(j-1),a+d(j-1)+(k-2)]$. 
We note that $h_a\ge2$, because $k-1\ge2p+1$ and $h_a$ is independent of $j$, since shifting $j$ by one shifts the interval by $d$ and does not change the number of integers congruent to $\sigma$ modulo $d$ inside it. Moreover,
$$
h_a=\left|\left\{t\in\{0,\ldots,k-2\}:a+t\equiv \sigma \pmod d\right\}\right|.
$$
Thus, when we sum over all $a\in[d]$, each offset
$t\in\{0,\ldots,k-2\}$ is counted exactly once, namely for the unique
$a\in[d]$ with $a+t\equiv\sigma\pmod d$. Consequently,
$$
\sum_{a\in[d]}h_a=k-1.
$$

We are now ready to complete the proof by showing the following claim.

\begin{claim}\label{clm:color-count}
The $k$-uniform  monotone path induced by $S$ spans at least $p+1$ colors under $\chi$.
\end{claim}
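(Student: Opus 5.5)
The plan is to show that the palette restricted to the edges $V_i$ with $i$ in the window corresponding to $\Delta^*$ already forces more than $p$ colors, by counting colors separately on each residue class modulo $d$. Write $W_i := V_{i+2p}$, so that $\Delta(W_i)=(\eta_i,\dots,\eta_{i+k-2})$ for every $i$ with $i+k-2\le n'-4p$. By Claim~\ref{clm:no-monotone}, every such $W_i$ is non-monotone, so $\chi(W_i)=(P(W_i),M(W_i))$ by \ref{it:R2}. Suppose for contradiction that at most $p$ colors appear.

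\emph{Step 1: different residue classes give disjoint color sets.} Since $P(W_i)$ is the $r$-pattern of $(\eta_i,\dots,\eta_{i+r-1})$, property \ref{it:P2} of Corollary~\ref{cor:monotone-subsequence} shows that $P(W_i)\ne P(W_j)$ whenever $i\not\equiv j \pmod d$. For $a\in[d]$, let $c_a$ be the number of colors used by the edges $W_i$ with $i\equiv a\pmod d$. These color sets are pairwise disjoint, so
\[
\sum_{a\in[d]} c_a\le p .
\]
Each $c_a\ge 1$ and $d\ge 2$, hence $c_a\le p-1$ for every $a$.

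\emph{Step 2: choose a good phase.} I claim some $a\in[d]$ satisfies $h_a\ge (s-2)c_a+2$. If not, then $h_a\le (s-2)c_a+1$ for all $a$. Summing and using $\sum_a h_a=k-1$ gives
\[
k-1\le (s-2)p+d\le (s-1)p .
\]
This contradicts $k-1>(s-1)p$, which holds because $s=\lceil (k-1)/p\rceil$. Fix such an $a$ and put $t=c_a\in[p-1]$. Then $(h_a,t)$ is admissible.

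\emph{Step 3: extract a forbidden lower-uniformity path.} For $i\equiv a\pmod d$, let $\beta\in[d]$ be the position of the first index in $[i,i+k-2]$ that is congruent to $\sigma$; it is independent of $i$ within the class. Then $\Delta_{d,\beta}(W_i)$ consists of $h_{d,\beta}=h_a$ consecutive terms of $\Delta^*_\sigma$. Since $\Delta^*_\sigma$ is strictly increasing, these terms are pairwise distinct, so $W_i$ is $(d,\beta,t)$-admissible. As $\beta\le d\le p$, the entry is defined, and
\[
m_{d\beta t}(W_i)=\phi_{h_a,t}\bigl(\Delta_{d,\beta}(W_i)\bigr).
\]
Passing from $i$ to $i+d$ shifts the window by exactly one term of $\Delta^*_\sigma$. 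Hence the sets $\Delta_{d,\beta}(W_{i}),\Delta_{d,\beta}(W_{i+d}),\dots$ are the consecutive edges of an $h_a$-uniform monotone path on the increasing sequence $\Delta^*_\sigma\subseteq[M]$. Along this path the $\phi_{h_a,t}$-colors are entries of the matrices $M(W_i)$, and these take at most $c_a=t$ distinct values. So $\phi_{h_a,t}$ has a monotone path with at most $t$ colors, and it remains only to check that the path has length at least $n$. This contradicts the choice of $\phi_{h_a,t}$.

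\emph{Main obstacle: the length count.} The class-$a$ edges $W_i$ number roughly $(n'-4p-k)/d$. Since $n'=(3p-1)n-k+r-1$, $d\le p$ and $n\ge 10k$, this is at least $\bigl((3p-1)n-2k-4p\bigr)/p\ge n$, which gives at least $n$ edges of the $h_a$-uniform path. Beyond this, some care is needed with the boundary bookkeeping: $\beta$ and $h_{d,\beta}$ must match $h_a$, and all windows must lie inside $\Delta^*$.
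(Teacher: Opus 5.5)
Your proof is correct and follows the paper's argument: color sets on different residue classes are disjoint by \ref{it:P2}, the counting $\sum_a h_a=k-1$ against $k-1>(s-1)p$ produces an admissible pair, and a forbidden $h_a$-uniform path is extracted through the entry $m_{d\beta t}$ of $M(W_i)$. The only difference is packaging: you argue by contradiction using the actual per-class color counts $c_a$ and locate one good phase, whereas the paper fixes budgets $t_a\le C_a=\lfloor (h_a-2)/(s-2)\rfloor$ with $\sum_a t_a=p-d+1$ and shows every class exceeds its budget.
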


\begin{claimproof}[Proof of Claim~\ref{clm:color-count}]
For each $a\in [d]$, put $C_a=\lfloor(h_a-2)/(s-2)\rfloor$. We first show that $\sum_a C_a\ge p-d+1$.  Suppose not.  Then $\sum_a C_a\le p-d$.  Since $h_a-2\le (s-2)(C_a+1)-1$ for every $a$, summing over all the choices of $a$ gives
$$
(k-1)-2d\le (s-2)p-d.
$$
Thus $k-1\le (s-2)p+d\le(s-1)p$.  But $s-1=\lceil(k-1)/p\rceil-1$, so the definition of the ceiling gives $k-1>(s-1)p$, a contradiction.

Choose integers $t_a$ with $0\le t_a\le C_a$ and $\sum_a t_a=p-d+1$.  Since $d\ge2$, every $t_a$ is strictly smaller than $p$. Recall that $\eta_i=\delta_{i+2p}$ for all $i\in [n'-4p]$. Define
$$
E_a=\{V_{i}:2p+1\le i\le n'-2p-(k-1),(i-2p) \equiv a \pmod d\}.
$$
Then, for any $V_i\in E_a$, we have
$$
\Delta(V_i)=(\delta_i,\dots,\delta_{i+{k-2}})=(\eta_{a+d(j-1)},\dots,\eta_{a+d(j-1)+(k-2)}).
$$
for some $j$. Recall that $n'=(3p-1)n-k+(r-1)$, Hence $n'-2p-(r-1)\le (3p-1)n-k$. Therefore, for every $V_i\in E_a$, the $\delta$-sequence $\Delta(V_i)$ is not monotone by Claim~\ref{clm:no-monotone}. This implies that $\chi(V_i)=(P(V_i),M(V_i))$ for all $V \in E_a$ by~\ref{it:R2}. We next show that
$$
|\chi(E_a)|\ge t_a+1.
$$
If $t_a=0$, this is trivial because the family is nonempty. Suppose $t_a>0$ and $|\chi(E_a)|\le t_a$. Since $t_a\le C_a$, the pair $(h_a,t_a)$ is admissible. Indeed,
\begin{equation}\label{eq:(h_a,t_a)admissible}
    (s-2)t_a+2\le (s-2)C_a+2\le h_a. 
\end{equation}
Let $\beta= \sigma-a+1\bmod{d}$. Then, the equality $h_{d,\beta}=h_a$ holds. Therefore, the inequality (\ref{eq:(h_a,t_a)admissible}) guarantees that every $V\in E_a$ is $(d,\beta,t_a)$-admissible, because $\Delta_{d,\beta}(V)$ is pairwise different since $\Delta^*_{\sigma}$ is strictly increasing. Hence $|\chi(E_a)|\le t_a$ implies that $\{\Delta_{d,\beta}(V_i):V_i\in E_a\}$ spans at most $t_a$ colors under $\phi_{h_a,t_a}$ where
$$
\Delta_{d,\beta}(V_{i})=(\delta_{i+(\beta-1)},\delta_{i+(\beta-1)+d},\dots,\delta_{i+(\beta-1)+d(h_a-1)}).
$$
Since $n\ge 10k$ and $k\ge 2p+2$, we have
$$
\sigma+2p+d(n+h_a-1)\le 3p+dn+k \le(3p-1)n-k+(r-1)-4p=n'-4p,
$$
the sequence
$$
S_\sigma=(\delta_{\sigma+2p+d},\delta_{\sigma+2p+2d},\dots, \delta_{\sigma+2p+d(n+h_a-1)})
$$
is strictly monotone. Furthermore, the set $\{\Delta_{d,\beta}(V_i):V_i\in E_a\}$ contains all consecutive $h_a$-block of $S_\sigma$. Since $\{\Delta_{d,\beta}(V_i):V_i\in E_a\}$ spans at most $t_a$ colors under $\phi_{h_a,t_a}$, the $h_a$-uniform  monotone path induced by $S_\sigma$ uses at most $t_a$ colors under $\phi_{h_a,t_a}$. This is a contradiction because $\phi_{h_a,t_a}$ does not contain a copy of $P_n^{(h_a)}$ using at most $t_a$ colors. Therefore, the inequality $|\chi(E_a)|\ge t_a+1$ holds.

Finally, by \ref{it:P2} of Corollary~\ref{cor:monotone-subsequence}, for $a_1\neq a_2\in [d]$ and any $V\in E_{a_1}$ and $W\in E_{a_2}$, the $r$ patterns $P(V)$ and $P(W)$ are different. Since $\chi(V)=(P(V),M(V))$ and $\chi(W)=(P(W),M(W))$, we have $\chi(V)\neq \chi(W)$. Therefore, for $E_S=\{V_i:i\in [3pn-(k-1)]\}$, we have 
$$
|\chi(E_S)|\ge |\chi(E_1)|+|\chi(E_2)|+\dots+|\chi(E_d)|\ge \sum_{a\in[d]}(t_a+1)=d+\sum_{a\in[d]}t_a\ge p+1.
$$
Therefore, the $k$-uniform  monotone path induced by $S$ spans at least $p+1$ colors under $\chi$.
\end{claimproof}

Claim~\ref{clm:color-count} contradicts the assumption that the $k$-uniform  monotone path induced by $S$ uses at most $p$ colors. Hence the constructed $Q$-coloring of $[2^M]^{(k)}$ contains no monotone $P_{3pn}^{(k)}$ using at most $p$ colors. This gives $A_k(3pn;Q,p)>2^M$, as claimed.
\end{proof}


\section{Proof of the lower bound}\label{sec:lower-bound}

We now prove Theorem~\ref{thm:main}(ii). The proof is an induction on
the allowed color complexity $p$, with a secondary induction on the
uniformity $k$. The two stepping-up lemmas from
Section~\ref{sec:step-up} provide the recursive step; before beginning
the induction, we record two elementary ingredients that serve as the
graph-level input and allow us to pass lower bounds to larger
uniformities.

The first ingredient is a graph-level lower bound. It can also be
deduced from a result of Fox, Sudakov, and Wigderson~\cite{Fox-Sudakov-Wigderson}
on color-avoiding directed paths in transitive tournaments; for
completeness, we include the short direct proof.

\begin{lemma}\label{lem:A_2}
    For positive integers $n,q,p$ with $q > p$, we have $A_2(n;q,p) \geq  (n^{1/p}-1)^{q}$.
\end{lemma}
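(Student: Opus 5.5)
We prove the bound by an explicit product (lexicographic) coloring, in the spirit of the extremal examples for the Erd\H{o}s--Szekeres theorem. Set $m=\lfloor n^{1/p}\rfloor$, so that $m\ge n^{1/p}-1$ and $m^p\le n$, and let $N=m^q$. It suffices to exhibit a $q$-coloring of $[N]^{(2)}$ with no monotone path $P_n^{(2)}$ whose edges use at most $p$ colors. Then $A_2(n;q,p)>N\ge (n^{1/p}-1)^q$.

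\emph{Construction.} Identify $[N]$ with $[m]^q$ via the lexicographic order. Thus $x<y$ exactly when, at the first coordinate $i$ where $x$ and $y$ differ, we have $x_i<y_i$. Color the edge $\{x,y\}$ with $x<y$ by this first differing coordinate $i\in[q]$. This uses $q$ colors.

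\emph{Key step.} Take a monotone path $x^{(1)}<x^{(2)}<\cdots<x^{(L)}$ whose edges use only colors in a set $T\subseteq[q]$ with $|T|\le p$. Let $\pi_T$ denote projection onto the coordinates in $T$, and order $[m]^T$ lexicographically with the induced order on $T$. We claim that $\pi_T(x^{(1)}),\dots,\pi_T(x^{(L)})$ is strictly increasing. To see this, consider an edge $\{x^{(j)},x^{(j+1)}\}$ with color $i\in T$. All coordinates of the two vectors before $i$ agree, and in particular so do those coordinates lying in $T$. Coordinate $i$ strictly increases. Hence $i$ is also the first differing coordinate of the projections, and the projection increases lexicographically at this step.

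\emph{Conclusion.} The projections are therefore distinct elements of $[m]^T$, so $L\le m^{|T|}\le m^p\le n$. A path with at most $n$ vertices has length at most $n-1<n$, so no copy of $P_n^{(2)}$ uses at most $p$ colors. The only point requiring care is the key step, namely that the lexicographic color records the first differing coordinate, so that restricting to the allowed colors preserves strict monotonicity. Everything else is bookkeeping with the choice of $m$. (If $m=0$, i.e.\ $N=0$, the bound is trivial, since $(n^{1/p}-1)^q\le 0$ would require $n^{1/p}\le 1$, i.e.\ $n=1$, which is also immediate.)
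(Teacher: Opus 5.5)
Your proof is correct and is essentially the paper's argument: the same lexicographic coloring of $[m]^q$ by first differing coordinate, and the same projection onto the allowed coordinates, which forces a strictly increasing sequence in $[m]^{|T|}$. The only difference is your choice $m=\lfloor n^{1/p}\rfloor$ with the count $L\le m^p\le n$, in place of the paper's $m=\lceil n^{1/p}\rceil-1$ with $m^p<n$; both are valid, and your closing remark about $m=0$ is vacuous, since $n\ge 1$ already forces $m\ge 1$.
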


\begin{proof}[Proof of Lemma~\ref{lem:A_2}]
If $n=1$, the statement is trivial.  So
assume $n\ge2$. Set $m=\lceil n^{1/p}\rceil-1$. 
Let $G$ be the complete ordered graph on $[m]^q$, ordered lexicographically:
for distinct vertices $x=(x_1,\ldots,x_q)$ and $y=(y_1,\ldots,y_q)$, write $x<y$ if
$x_i<y_i$ at the first coordinate $i$ where $x_i\ne y_i$.  Color the edge
$xy$, with $x<y$, by this first differing coordinate $i$.

We claim that this coloring has no monotone path of length $n$ using at most
$p$ colors.  Suppose such a path is
$x^{(0)}<x^{(1)}<\cdots<x^{(n)}$, and suppose all its edge colors lie in
$S\subseteq[q]$, where $|S|\le p$.
For each vertex $x$, let $x|_S$ be its projection to the coordinates in $S$.
Along every edge of the path, the first coordinate where the endpoints differ
belongs to $S$.  Hence the projected tuples
$x^{(0)}|_S,x^{(1)}|_S,\ldots,x^{(n)}|_S$ strictly increase in lexicographic
order.
But there are only $m^{|S|}\le m^p<n$ possible projected tuples.  This is
impossible for a strictly increasing sequence of $n+1$ projected tuples.
Therefore the coloring avoids every monotone path of length $n$ using at most
$p$ colors.

The graph has $m^q$ vertices, so $A_2(n;q,p)>m^q$.  Since
$m\ge n^{1/p}-1$, we get the desired estimate $A_2(n;q,p)\ge (n^{1/p}-1)^q$.
\end{proof}

The second ingredient is monotonicity in the uniformity: any lower bound
for smaller uniformity automatically gives the same lower bound for all
larger uniformities.

\begin{lemma}\label{lem:monotone-in-uniformity}
For any integers $r>s\ge 2$, $q,p\ge 1$, and $n\in\mathbb N$,
\[
    A_r(n;q,p)\ge A_s(n;q,p).
\]
\end{lemma}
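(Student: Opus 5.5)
It suffices to prove $A_{s+1}(n;q,p)\ge A_s(n;q,p)$ and then induct on $r-s$. Put $N=A_s(n;q,p)-1$. By the definition of $A_s(n;q,p)$, there is a $q$-coloring $\phi$ of $[N]^{(s)}$ that contains no monotone path of length $n$ using at most $p$ colors. From $\phi$ we will build a $q$-coloring $\psi$ of $[N]^{(s+1)}$ with the same property. This shows $A_{s+1}(n;q,p)>N$, as required.

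The natural choice is to color an $(s+1)$-edge by its first $s$ vertices. For $x_1<\cdots<x_{s+1}$ set
\[
    \psi(\{x_1,\ldots,x_{s+1}\})\defeq\phi(\{x_1,\ldots,x_s\}).
\]
Now suppose $v_1<\cdots<v_{n+s}$ spans an $(s+1)$-uniform monotone path of length $n$ whose edges use at most $p$ colors under $\psi$. For $i\in[n]$, the $i$-th edge $\{v_i,\ldots,v_{i+s}\}$ has $\psi$-color $\phi(\{v_i,\ldots,v_{i+s-1}\})$. These sets $\{v_i,\ldots,v_{i+s-1}\}$, $i\in[n]$, are exactly the $n$ edges of the $s$-uniform monotone path on $v_1<\cdots<v_{n+s-1}$. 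So that path has length $n$, and its $\phi$-colors coincide with the $\psi$-colors of the original path, hence use at most $p$ colors. This contradicts the choice of $\phi$.

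There is no real obstacle here. The only points to check are edge cases. If $N<s+1$, the hypergraph $[N]^{(s+1)}$ is empty and the bound holds trivially. If $n=0$, the convention gives equality, and the definitions should be read accordingly. The argument also uses no assumption relating $q$ and $p$.
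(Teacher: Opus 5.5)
Your proposal is correct and is the paper's proof: reduce to $r=s+1$, take an extremal coloring $\phi$ of $[A_s(n;q,p)-1]^{(s)}$, color each $(s+1)$-edge by $\phi$ of its first $s$ vertices, and note that the $(s+1)$-uniform path on $v_1<\cdots<v_{n+s}$ yields the $s$-uniform path on $v_1<\cdots<v_{n+s-1}$ with exactly the same colors. Your edge-case remarks are unnecessary: the construction works vacuously when $[N]^{(s+1)}$ is empty, and for $n=0$ the argument still goes through without any special convention, so those cases need no separate treatment.
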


\begin{proof}[Proof of Lemma~\ref{lem:monotone-in-uniformity}]

It is enough to prove the case $r=s+1$.  Fix $k\ge 2$, and set
$M=A_k(n;q,p)-1$.  By the definition of $A_k(n;q,p)$, there is a $q$-coloring
$\phi$ of $[M]^{(k)}$ with no copy of $P_n^{(k)}$ using at most $p$ colors.

Define a $q$-coloring $\Phi$ of $[M]^{(k+1)}$ as follows.  If
$v_1<\cdots<v_{k+1}$, set
$\Phi(\{v_1,\ldots,v_{k+1}\})=\phi(\{v_1,\ldots,v_k\})$.
Suppose, for contradiction, that there is a copy of $P_n^{(k+1)}$
using at most $p$ colors, with vertices $v_1<\cdots<v_{n+k}$.  For
$i=1,\ldots,n$, the color of the $i$th edge is
$\phi(\{v_i,\ldots,v_{i+k-1}\})$.  Hence
$v_1<\cdots<v_{n+k-1}$ form a copy of $P_n^{(k)}$ under $\phi$ using at most
$p$ colors, a contradiction.
Thus $A_{k+1}(n;q,p)>M=A_k(n;q,p)-1$, so
$A_{k+1}(n;q,p)\ge A_k(n;q,p)$.  Iterating gives the result.
 \end{proof}

We are now ready to prove the lower bound in Theorem~\ref{thm:main}(ii).

\begin{proof}[Proof of Theorem~\ref{thm:main}(ii)]

For any positive integer $m$, let $g_m(x) = \lfloor x^{\frac{1}{(2m^3)^m}} \rfloor$ and $f_m(x)=x^{(2m^3)^m}$. Observe that the function $g_m$ is an increasing function that tends to infinity for each $m$. 
We proceed by induction on $p$ to show that 
\begin{equation}\label{eq:goal}
    A_k(n;q,p) \geq T_{s}\left(\left(\frac{n}{(4p)^k}\right)^{g_{p}(q)/2p^3}\right)
\end{equation}
holds for all $k \ge 2$, $q \ge f_p(2p^3)$, and $n \geq N(k,q,p) = C_0(4p)^{qk}$, where $s=\lceil (k-1)/p \rceil$ and $C_0$ is a sufficiently large absolute constant. 

For the base case $p=1$, the case $k=2$ follows from
Lemma~\ref{lem:A_2}. The case $k=3$ follows from
Theorem~\ref{thm:stepup-low-uniformity} with $p=1$, applied with
$\lfloor q/2\rfloor$ colors in the graph construction. For $k\ge4$,
we repeatedly apply Remark~\ref{rem:p=1-main-stepup}. This gives the
required tower-height $k-1$ lower bound.

Now we may assume that $p \geq 2$ and the statement holds for all $p' < p$. To prove the statement for $p$, we use a secondary induction on $k$.
\begin{case}\label{case:k=<p+1}
$k\le p+1$.
\end{case}

By Lemma~\ref{lem:monotone-in-uniformity} and Lemma~\ref{lem:A_2}, we have 
$$
A_k(n;q,p)\ge A_2(n;q,p)\ge(n^{1/p}-1)^q\ge \left(\frac{n}{(4p)^k}\right)^{g_{p}(q)/(2p^3)}.
$$
This proves the base cases of the secondary induction on $k$.

\begin{case}\label{case:k=<2p+1}
$p+2\le k\le 2p+1$.    
\end{case}
By Proposition~\ref{lem:monotone-in-uniformity}, it suffices to show that
$$
    A_{p+2}(n; q, p) \geq T_2\left(\left(\frac{n}{(4p)^k}\right)^{g_{p}(q)/(2p^3)}\right).
$$

Let $C=\binom{p+1}{2}$.  Since $q\ge f_p(2p^3)$, we have
$g_p(q)\ge 2p^3\ge pC+1$.  Also, from $g_p(q)\le q^{1/(2p^3)}$, we get
$q\ge g_p(q)^{2p^3}\ge (p+1)g_p(q)^C$.
Therefore, by applying Proposition~\ref{lem:monotone-in-uniformity} and Theorem~\ref{thm:stepup-low-uniformity}, we have
\begin{equation}\label{eq:(k=<p+2)1}
A_{k}(n; q, p)\ge A_{p+2}(n; q, p) \geq T_2(A_2(\lfloor n/3p \rfloor;g_{p}(q),pC)-1).    
\end{equation}
Furthermore, Observation~\ref{lem:A_2} guarantees that 
\begin{equation}\label{eq:(k=<p+2)2}
A_2(\lfloor n/3p \rfloor;g_{p}(q),pC)-1\ge (\lfloor n/3p \rfloor^{1/(pC)}-1)^{g_{p}(q)}-1\ge  \left(\frac{n}{(4p)^k}\right) ^{g_{p}(q)/(2p^3)}.    
\end{equation}
Combining (\ref{eq:(k=<p+2)1}) and (\ref{eq:(k=<p+2)2}) yields
$$
A_{k}(n; q, p)\geq T_2(A_2(\lfloor n/3p \rfloor;g_{p}(q),pC)-1)\ge  T_2\left(\left(\frac{n}{(4p)^k}\right) ^{g_{p}(q)/(2p^3)}\right).
$$
This completes the proof of Case~\ref{case:k=<2p+1}.

\begin{case}{\label{case:k=>2p+2}} 
    $k\ge 2p+2$.
\end{case}
We now assume $k \geq 2p+2$, and suppose the equation (\ref{eq:goal}) holds for all $k' < k$, for any $q \geq f_p(2p^3)$ and $n \geq N(k', q, p)$. Since $q$ is sufficiently large in terms of $p$, we have
$\lfloor q^{1/(2p^3)}\rfloor>p$ and
$\lfloor q^{1/(2p^3)}\rfloor^{p^3}\le q$.  Hence
Theorem~\ref{thm:main-stepup} implies $A_k(n;q,p)\ge 2^M$, where
\[
M+1=
\min\left\{
    A_{k-1}\!\left(\left\lfloor \frac{n}{3p}\right\rfloor;q,p\right),\
    A_h\!\left(\left\lfloor \frac{n}{3p}\right\rfloor;
    \left\lfloor q^{1/(2p^3)}\right\rfloor,t\right)
\right\}.
\]
Here the minimum is taken over the first term and over all terms of the second
form with $t\in[p-1]$ and $(s-2)t+2\le h\le k-1$.

\begin{claim}\label{clm:M+1}
    $M\ge T_{s-1}\left(\left(\frac{n}{(4p)^{k}}\right)^{g_{p}(q)/(2p^3)}\right)$.
\end{claim}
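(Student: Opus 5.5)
We bound each term of the minimum defining $M+1$ from below by the induction hypotheses, then absorb the $-1$ and the loss in $n$ into the base of the tower. Write $n_1=\lfloor n/(3p)\rfloor$, $q'=\lfloor q^{1/(2p^3)}\rfloor$, and $X=\bigl(n/(4p)^k\bigr)^{g_p(q)/(2p^3)}$.

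\textbf{First term.} For $A_{k-1}(n_1;q,p)$ we use the secondary induction hypothesis at uniformity $k-1$ with the same $q$ and $p$. Its tower height is $s'=\lceil (k-2)/p\rceil\ge s-1$, and the base is $\bigl(n_1/(4p)^{k-1}\bigr)^{g_p(q)/(2p^3)}$. Since $n_1\ge n/(4p)$ for large $n$, we have $n_1/(4p)^{k-1}\ge n/(4p)^k$. Hence this term is at least $T_{s-1}(X)$. We also need $n_1\ge N(k-1,q,p)=C_0(4p)^{q(k-1)}$, which follows from $n\ge C_0(4p)^{qk}$ because $3p\cdot(4p)^{q(k-1)}\le (4p)^{qk}$.

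\textbf{Second family.} For $A_h(n_1;q',t)$ with $t\in[p-1]$ and $(s-2)t+2\le h\le k-1$, we use the outer induction on $p$, applied with color complexity $t<p$. This is legitimate provided $q'\ge f_t(2t^3)$, which holds since $q\ge f_p(2p^3)$ is huge and $f_t(2t^3)^{2p^3}\le f_p(2p^3)$ for $t<p$. The resulting tower height is
\[
\Bigl\lceil \tfrac{h-1}{t}\Bigr\rceil\ \ge\ \Bigl\lceil \tfrac{(s-2)t+1}{t}\Bigr\rceil\ =\ s-1,
\]
which is exactly why the admissibility condition $(s-2)t+2\le h$ was imposed. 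The base is $\bigl(n_1/(4t)^h\bigr)^{g_t(q')/(2t^3)}$, and we must show it is at least $X$. Two comparisons are needed.
\begin{itemize}
\item \emph{Ratio of $n$-parts.} Since $h\le k-1$ and $t<p$, we have $n_1/(4t)^h\ge n/(4p)^k$.
\item \emph{Exponents.} We need $g_t(q')/(2t^3)\ge g_p(q)/(2p^3)$. Since $g_t(q')\approx q^{1/((2p^3)(2t^3)^t)}$ and $(2p^3)(2t^3)^t\le (2p^3)^p$ for $t\le p-1$, we get $g_t(q')\ge g_p(q)$ up to floors. The extra factor $p^3/t^3>1$ absorbs the floor losses once $q$ is large.
\end{itemize}
The threshold condition $n_1\ge N(h,q',t)=C_0(4t)^{q'h}$ holds easily, because $q'h\le qk$.

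\textbf{Conclusion and main obstacle.} Every term in the minimum is therefore at least $T_{s-1}(X)$, and monotonicity of the tower in its height lets us replace larger heights by $s-1$. The remaining step is to pass from $M+1\ge T_{s-1}(X)$ to $M\ge T_{s-1}(X)$. I would handle this by keeping a little slack in the base: replacing $4p$ by $3p$, say, gives a strictly larger tower, and the $-1$ is then absorbed for $n$ large. The main obstacle is the exponent comparison $g_t(q')/(2t^3)\ge g_p(q)/(2p^3)$ together with the floor bookkeeping. This is where the specific choice $g_m(x)=\lfloor x^{1/(2m^3)^m}\rfloor$ must be checked, using $(2t^3)^t\cdot 2p^3\le (2p^3)^p$ for $t<p$.
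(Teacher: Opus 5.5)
Your proposal is correct and is essentially the paper's proof. Like the paper, you bound each term of the minimum using the secondary induction on $k$ for $A_{k-1}$ and the outer induction on $p$ for $A_h(\lfloor n/(3p)\rfloor;\lfloor q^{1/(2p^3)}\rfloor,t)$, via $\lceil (k-2)/p\rceil\ge s-1$, $\lceil (h-1)/t\rceil\ge s-1$, $\lfloor n/(3p)\rfloor/(4p)^{h}\ge n/(4p)^k$ and $g_t(\lfloor q^{1/(2p^3)}\rfloor)\ge g_p(q)$, and you absorb the $-1$ through the same slack in the base that the paper uses to get ``right-hand side plus $1$''. You are actually more careful than the paper in checking the side conditions $\lfloor q^{1/(2p^3)}\rfloor\ge f_t(2t^3)$ and $\lfloor n/(3p)\rfloor\ge N(\cdot)$ that are needed to invoke the induction hypotheses.
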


\begin{claimproof}[Proof of Claim~\ref{clm:M+1}]
It is enough to show that every term in the minimum defining $M+1$ is at least
the right-hand side of the claim plus $1$.

First consider the term
$A_{k-1}(\lfloor n/(3p)\rfloor;q,p)$.  By the induction hypothesis on $k$ and
the inequality $s-1\le \lceil (k-2)/p\rceil$, we have
\[
\begin{aligned}
A_{k-1}\left(\left\lfloor \frac{n}{3p}\right\rfloor;q,p\right)
&\ge
T_{\lceil (k-2)/p\rceil}\left(
\left(
\frac{\lfloor n/(3p)\rfloor}{(4p)^{k-1}}
\right)^{g_p(q)/(2p^3)}
\right)  \\
&\ge
T_{s-1}\left(\left(\frac{n}{(4p)^k}\right)^{g_p(q)/(2p^3)}\right)+1.
\end{aligned}
\]

Now consider a term of the second form, with $t\in[p-1]$ and
$(s-2)t+2\le h\le k-1$.  By the induction hypothesis on $p$, and since
$s-1\le \lceil (h-1)/t\rceil$, we get
\[
\begin{aligned}
A_h\left(\left\lfloor \frac{n}{3p}\right\rfloor;
\left\lfloor q^{1/(2p^3)}\right\rfloor,t\right)
&\ge
T_{\lceil (h-1)/t\rceil}\left(
\left(
\frac{\lfloor n/(3p)\rfloor}{(4p)^h}
\right)^{
g_t(\lfloor q^{1/(2p^3)}\rfloor)/(2p^3)}
\right)  \\
&\ge
T_{s-1}\left(\left(\frac{n}{(4p)^k}\right)^{g_p(q)/(2p^3)}\right)+1.
\end{aligned}
\]
The last inequality follows because $h\le k-1$,
$\lfloor n/(3p)\rfloor/(4p)^h\ge n/(4p)^k$ for sufficiently large $n$, and
$g_t(\lfloor q^{1/(2p^3)}\rfloor)\ge g_p(q)$ for every $t<p$, once $q$ is
sufficiently large in terms of $p$.
\end{claimproof}

Since $A_k(n;q,p)>2^M$, the claim gives
$A_k(n;q,p)>
    T_s\left(\left(\frac{n}{(4p)^k}\right)^{g_p(q)/(2p^3)}\right)$, as required.
\end{proof}

\section{Concluding remarks}

We have determined the tower height of $A_k(n;q,p)$ for every fixed $p$
once $q$ is sufficiently large in terms of $p$. It remains natural to
remove this largeness assumption on $q$. We believe that the same tower
height is correct for every nontrivial number of colors $q>p$.

\begin{conjecture}\label{conj:small-q}
For every integer $p\ge 1$, there exists a constant $c_p>0$, depending
only on $p$, such that for every $k\ge 2$ and all sufficiently large $n$,
\[
    A_k(n;p+1,p)
    \ge
    T_{\lceil (k-1)/p\rceil}\left(n^{c_p}\right).
\]
\end{conjecture}

Together with Theorem~\ref{thm:main}(i),
Conjecture~\ref{conj:small-q} would determine the tower height of
$A_k(n;q,p)$ for every $q>p$, namely $\lceil (k-1)/p\rceil$, since the
case $q=p+1$ implies the lower bound for all larger $q$ by monotonicity
in $q$.

We also mention a consequence for a question from the first arXiv
version of this paper. There, the first and second authors asked whether
for each fixed $p\ge 1$ there is an integer $a_p$ such that
\[
    A_{k+a_p}(n;q,p+1) \ge A_k(n;q,p)
\]
holds for all $k$ and all sufficiently large $n$ and $q$
\cite[Question~5.2]{Choi-Lee}. Our results give a negative answer. For
$q$ sufficiently large, the tower height of the left-hand side is
$\left\lceil (k+a_p-1)/(p+1)\right\rceil$,
whereas the tower height of the right-hand side is
$\left\lceil (k-1)/p\right\rceil$.
For every fixed $a_p$, the latter is larger when $k$ is sufficiently
large. Thus, in the monotone-path setting, allowing one additional color
on the desired path cannot be compensated for by any bounded increase in
the uniformity. This contrasts with the corresponding color-avoidance problems for
complete hypergraphs, where the uniformity parameter can have a much
stronger influence on the Ramsey numbers; see, for example, the results
of Dubroff, Gir{\~a}o, Hurley, and Yap~\cite{Dubroff-Girao-Hurley-Yap}.

\section*{Declaration on the use of generative AI}

The third author used GPT-5.5 Pro as a sounding board during the later stages of the development of this paper. The main ideas and proofs were
developed by the authors, with the exception of the short block-compression argument used for the upper bound in Theorem~\ref{thm:main}(i), which was suggested by GPT-5.5 Pro. All
arguments were checked and finalized by the authors, who take full responsibility for the content of the paper.

\section*{Acknowledgements}
The first and second authors thank Eion Mulrenin and Eero R\"{a}ty for helpful comments. They also thank their advisor, Jaehoon Kim, for his continued support and encouragement.


\vspace{-0.2cm}

\providecommand{\MR}[1]{}
\providecommand{\MRhref}[2]{%
  \href{http://www.ams.org/mathscinet-getitem?mr=#1}{#2}
}

    \bibliographystyle{amsplain_initials_nobysame}
    \bibliography{bibfile}

@article {Mulrenin-Pohoata-Zakharov,
    AUTHOR = {Mulrenin, Eion and Pohoata, Cosmin and Zakharov, Dmitrii},
     TITLE = {Color avoidance for monotone paths},
   JOURNAL = {Discrete Anal.},
  FJOURNAL = {Discrete Analysis},
      YEAR = {2025},
     PAGES = {Paper No. 23, 14},
      ISSN = {2397-3129},
   MRCLASS = {05C55 (05D10)},
  MRNUMBER = {4975150},
}

@article {MS,
    AUTHOR = {Moshkovitz, Guy and Shapira, Asaf},
     TITLE = {Ramsey theory, integer partitions and a new proof of the {E}rd{\H{o}}s-{S}zekeres theorem},
   JOURNAL = {Adv. Math.},
  FJOURNAL = {Advances in Mathematics},
    VOLUME = {262},
      YEAR = {2014},
     PAGES = {1107--1129},
      ISSN = {0001-8708,1090-2082},
   MRCLASS = {05D10},
  MRNUMBER = {3228450},
MRREVIEWER = {Peter\ D.\ Johnson, Jr.},
       DOI = {10.1016/j.aim.2014.06.008},
       URL = {https://doi.org/10.1016/j.aim.2014.06.008},
}

@article {Dubroff-Girao-Hurley-Yap,
    AUTHOR = {Dubroff, Quentin and Gir{\~a}o, Ant{\'o}nio and Hurley, Eoin and
              Yap, Corrine},
     TITLE = {Tower gaps in multicolour {R}amsey numbers},
   JOURNAL = {Forum Math. Sigma},
  FJOURNAL = {Forum of Mathematics. Sigma},
    VOLUME = {11},
      YEAR = {2023},
     PAGES = {Paper No. e84, 15},
      ISSN = {2050-5094},
   MRCLASS = {05C55},
  MRNUMBER = {4651621},
MRREVIEWER = {Tomasz\ Dzido},
       DOI = {10.1017/fms.2023.89},
       URL = {https://doi-org.libra.kaist.ac.kr/10.1017/fms.2023.89},
}

@article {StepUp,
    AUTHOR = {Erd{\H{o}}s, P. and Hajnal, A. and Rado, R.},
     TITLE = {Partition relations for cardinal numbers},
   JOURNAL = {Acta Math. Acad. Sci. Hungar.},
  FJOURNAL = {Acta Mathematica. Academiae Scientiarum Hungaricae},
    VOLUME = {16},
      YEAR = {1965},
     PAGES = {93--196},
      ISSN = {0001-5954,1588-2632},
   MRCLASS = {04.60},
  MRNUMBER = {202613},
MRREVIEWER = {L.\ Gillman},
       DOI = {10.1007/BF01886396},
       URL = {https://doi-org.libra.kaist.ac.kr/10.1007/BF01886396},
}

@article {ErdosSzekeres,
    AUTHOR = {Erd{\H{o}}s, P. and Szekeres, G.},
     TITLE = {A combinatorial problem in geometry},
   JOURNAL = {Compositio Math.},
  FJOURNAL = {Compositio Mathematica},
    VOLUME = {2},
      YEAR = {1935},
     PAGES = {463--470},
      ISSN = {0010-437X,1570-5846},
   MRCLASS = {99-04},
  MRNUMBER = {1556929},
       URL = {http://www.numdam.org/item?id=CM_1935__2__463_0},
}

@article {MorseHedlund,
    AUTHOR = {Morse, Marston and Hedlund, Gustav A.},
     TITLE = {Symbolic {D}ynamics},
   JOURNAL = {Amer. J. Math.},
  FJOURNAL = {American Journal of Mathematics},
    VOLUME = {60},
      YEAR = {1938},
    NUMBER = {4},
     PAGES = {815--866},
      ISSN = {0002-9327,1080-6377},
   MRCLASS = {99-04},
  MRNUMBER = {1507944},
       DOI = {10.2307/2371264},
       URL = {https://doi-org.libra.kaist.ac.kr/10.2307/2371264},
}

@article {Fox-Pach-Sudakov-Suk,
    AUTHOR = {Fox, Jacob and Pach, J\'anos and Sudakov, Benny and Suk,
              Andrew},
     TITLE = {Erd{\H o}s-{S}zekeres-type theorems for monotone paths and
              convex bodies},
   JOURNAL = {Proc. Lond. Math. Soc. (3)},
  FJOURNAL = {Proceedings of the London Mathematical Society. Third Series},
    VOLUME = {105},
      YEAR = {2012},
    NUMBER = {5},
     PAGES = {953--982},
      ISSN = {0024-6115,1460-244X},
   MRCLASS = {05D10 (05C65 52C10 91A43)},
  MRNUMBER = {2997043},
MRREVIEWER = {David\ Conlon},
       DOI = {10.1112/plms/pds018},
       URL = {https://doi-org.libra.kaist.ac.kr/10.1112/plms/pds018},
}

@article {Pohoata-Zakharov,
    AUTHOR = {Pohoata, Cosmin and Zakharov, Dmitrii},
     TITLE = {On the number of high-dimensional partitions},
   JOURNAL = {Proc. Lond. Math. Soc. (3)},
  FJOURNAL = {Proceedings of the London Mathematical Society. Third Series},
    VOLUME = {128},
      YEAR = {2024},
    NUMBER = {2},
     PAGES = {Paper No. e12586, 12},
      ISSN = {0024-6115,1460-244X},
   MRCLASS = {05A17 (06A11)},
  MRNUMBER = {4753776},
MRREVIEWER = {Mikl\'os\ B\'ona},
       DOI = {10.1112/plms.12586},
       URL = {https://doi-org.libra.kaist.ac.kr/10.1112/plms.12586},
}

@article {Falgas-Ravry-Raty-Tomon,
    AUTHOR = {Falgas-Ravry, Victor and R{\"a}ty, Eero and Tomon, Istv\'an},
     TITLE = {Dedekind's problem in the hypergrid},
   JOURNAL = {Adv. Math.},
  FJOURNAL = {Advances in Mathematics},
    VOLUME = {488},
      YEAR = {2026},
     PAGES = {Paper No. 110796, 41},
      ISSN = {0001-8708,1090-2082},
   MRCLASS = {05A16 (05A17 05D10)},
  MRNUMBER = {5018899},
       DOI = {10.1016/j.aim.2026.110796},
       URL = {https://doi-org.libra.kaist.ac.kr/10.1016/j.aim.2026.110796},
}

@article {Erdos-Szemeredi,
    AUTHOR = {Erd{\H o}s, P. and Szemer{\'e}di, A.},
     TITLE = {On a {R}amsey type theorem},
   JOURNAL = {Period. Math. Hungar.},
  FJOURNAL = {Periodica Mathematica Hungarica. Journal of the J\'anos Bolyai
              Mathematical Society},
    VOLUME = {2},
      YEAR = {1972},
     PAGES = {295--299},
      ISSN = {0031-5303,1588-2829},
   MRCLASS = {05C30},
  MRNUMBER = {325446},
MRREVIEWER = {J.\ E.\ Graver},
       DOI = {10.1007/BF02018669},
       URL = {https://doi-org.libra.kaist.ac.kr/10.1007/BF02018669},
}

@article {set-ramsey-lower,
    AUTHOR = {Arag{\~a}o, Lucas and Collares, Maur{\'i}cio and Marciano, Jo{\~a}o
              Pedro and Martins, Ta{\'i}sa and Morris, Robert},
     TITLE = {A lower bound for set-coloring {R}amsey numbers},
   JOURNAL = {Random Structures Algorithms},
  FJOURNAL = {Random Structures \& Algorithms},
    VOLUME = {64},
      YEAR = {2024},
    NUMBER = {2},
     PAGES = {157--169},
      ISSN = {1042-9832,1098-2418},
   MRCLASS = {05C55 (05C80 05D40)},
  MRNUMBER = {4704267},
MRREVIEWER = {Yuval\ Wigderson},
       DOI = {10.1002/rsa.21173},
       URL = {https://doi-org.libra.kaist.ac.kr/10.1002/rsa.21173},
}

@article {set-ramsey-code,
    AUTHOR = {Conlon, David and Fox, Jacob and He, Xiaoyu and Mubayi, Dhruv
              and Suk, Andrew and Verstra\"ete, Jacques},
     TITLE = {Set-coloring {R}amsey numbers via codes},
   JOURNAL = {Studia Sci. Math. Hungar.},
  FJOURNAL = {Studia Scientiarum Mathematicarum Hungarica. Combinatorics,
              Geometry and Topology (CoGeTo)},
    VOLUME = {61},
      YEAR = {2024},
    NUMBER = {1},
     PAGES = {1--15},
      ISSN = {0081-6906,1588-2896},
   MRCLASS = {05C55 (94B99)},
  MRNUMBER = {4752601},
MRREVIEWER = {Yuval\ Wigderson},
}

@article{Seidenberg,
    AUTHOR = {Seidenberg, A.},
     TITLE = {A simple proof of a theorem of {E}rd{\H o}s and {S}zekeres},
   JOURNAL = {J. London Math. Soc.},
  FJOURNAL = {Journal of the London Mathematical Society},
      YEAR = {1959},
    VOLUME = {34},
     PAGES = {352--353},
   MRCLASS = {05.00},
  MRNUMBER = {0101288},
}

@unpublished{Fox-Sudakov-Wigderson,
  title={Color-avoiding directed paths in tournaments},
  author={Fox, Jacob and Sudakov, Benny and Wigderson, Yuval},
  note={\path{arXiv:2512.10438}},
  year={2025}
}

@article{Loh,
  title={Directed paths: from {R}amsey to {R}uzsa and {S}zemer{\'e}di},
  author={Loh, Po-Shen},
  note={\path{arXiv:1505.07312}},
  year={2015}
}

@article {Gowers-Long,
    AUTHOR = {Gowers, W. T. and Long, J.},
     TITLE = {The length of an {$s$}-increasing sequence of {$r$}-tuples},
   JOURNAL = {Combin. Probab. Comput.},
  FJOURNAL = {Combinatorics, Probability and Computing},
    VOLUME = {30},
      YEAR = {2021},
    NUMBER = {5},
     PAGES = {686--721},
      ISSN = {0963-5483,1469-2163},
   MRCLASS = {05D99 (05C35)},
  MRNUMBER = {4298506},
       DOI = {10.1017/s0963548320000371},
       URL = {https://doi-org.libra.kaist.ac.kr/10.1017/s0963548320000371},
}

@book {UMP,
    AUTHOR = {Graham, Ronald L. and Rothschild, Bruce L. and Spencer, Joel
              H.},
     TITLE = {Ramsey theory},
    SERIES = {Wiley Series in Discrete Mathematics and Optimization},
   EDITION = {Paperback},
 PUBLISHER = {John Wiley \& Sons, Inc., Hoboken, NJ},
      YEAR = {2013},
     PAGES = {xiv+196},
      ISBN = {978-1-118-79966-6},
   MRCLASS = {05-02 (05A99 05C55 54H20)},
  MRNUMBER = {3288500},
}

@book{LindMarcus,
    AUTHOR = {Lind, Douglas and Marcus, Brian},
     TITLE = {An introduction to symbolic dynamics and coding},
 PUBLISHER = {Cambridge University Press},
   ADDRESS = {Cambridge},
      YEAR = {1995},
}

@incollection{Ruzsa-Szemeredi,
    AUTHOR = {Ruzsa, I. Z. and Szemer{\'e}di, E.},
     TITLE = {Triple systems with no six points carrying three triangles},
 BOOKTITLE = {Combinatorics},
    SERIES = {Colloq. Math. Soc. J{\'a}nos Bolyai},
    VOLUME = {18},
     PAGES = {939--945},
 PUBLISHER = {North-Holland},
   ADDRESS = {Amsterdam-New York},
      YEAR = {1978},
}

@incollection{Steele,
    AUTHOR = {Steele, J. Michael},
     TITLE = {Variations on the monotone subsequence theme of {E}rd{\H{o}}s and {S}zekeres},
 BOOKTITLE = {Discrete Probability and Algorithms},
    SERIES = {IMA Vol. Math. Appl.},
    VOLUME = {72},
     PAGES = {111--131},
 PUBLISHER = {Springer},
   ADDRESS = {New York},
      YEAR = {1995},
}

@unpublished{Choi-Lee,
  title={A stepping-up lemma for monotone paths with bounded color complexity},
  author={Choi, Jigang and Lee, Hyunwoo},
  note={\path{arXiv:2605.12318}},
  year={2026}
}


\end{document}